\documentclass{amsart}
\usepackage{amsmath}
\usepackage{amsthm}
\usepackage{amssymb}

\parindent = 0 pt
\parskip = 8 pt
\theoremstyle{plain}
  \newtheorem{theorem}{\bf Theorem}[section]
  \newtheorem{propn}[theorem]{\bf Proposition}
  \newtheorem{lemma}[theorem]{\bf Lemma}
  \newtheorem{Observation}[theorem]{\bf Observation}
  \newtheorem{corollary}[theorem]{\bf Corollary}

\theoremstyle{remark}
  
\theoremstyle{remarks}

\newcommand{\supp}{\mathrm{supp}}



\begin{document}
\title[Weighted inequalities for oscillatory integrals of finite type]{Weighted norm inequalities for oscillatory integrals with finite type phases on the line} \keywords{Weighted inequalities, oscillatory integrals, convolution}
\subjclass[2000]{44B20; 42B25}
\author{Jonathan Bennett}
\author{Samuel Harrison}
\thanks{The first author was partially supported by EPSRC
grant EP/E022340/1, and the second by an EPSRC Doctoral Training Grant.
}
\address{Jonathan Bennett and Samuel Harrison, School of Mathematics, The Watson Building, University of Birmingham, Edgbaston,
Birmingham, B15 2TT, England.} \email{J.Bennett@bham.ac.uk}
\email{harriss@maths.bham.ac.uk}
\begin{abstract}
We obtain two-weighted $L^2$ norm inequalities for oscillatory integral operators of convolution type on the line whose phases are of finite type. The conditions imposed on the weights involve geometrically-defined maximal functions, and the inequalities are best-possible in the sense that they imply the full $L^p(\mathbb{R})\rightarrow L^q(\mathbb{R})$ mapping properties of the oscillatory integrals. Our results build on work of Carbery, Soria, Vargas and the first author.
\end{abstract}
\maketitle
\section{Introduction}

Weighted norm inequalities have been the subject of intense study in harmonic analysis in recent decades. On an informal level, given a suitable operator $T$ (which maps functions on $\mathbb{R}^m$ to functions on $\mathbb{R}^n$, say) and an exponent $p\in [1,\infty)$, such inequalities typically take the form
\begin{equation}\label{gensetup}
\int_{\mathbb{R}^n}|Tf|^pv\leq\int_{\mathbb{R}^m}|f|^pu,
\end{equation}
where the weights $u$ and $v$ are certain nonnegative functions on $\mathbb{R}^m$ and $\mathbb{R}^n$ respectively. In such a context the general goal is to understand geometrically the pairs of weights $u$ and $v$ for which \eqref{gensetup} holds for all admissible inputs $f$. As may be expected, sufficient conditions on $u,v$ are often of the form $\mathcal{M}v\leq u$ for some appropriate maximal operator $\mathcal{M}$ capturing certain geometric characteristics of $T$. Under such circumstances a simple duality argument generally allows \eqref{gensetup} to transfer bounds on $\mathcal{M}$ to bounds on $T$. For example, if $q,\widetilde{q}\geq p$ then
\begin{eqnarray}\label{dualityarg}
\begin{aligned}
\|Tf\|_{L^q(\mathbb{R}^n)}&=\sup_{\|v\|_{(q/p)'}=1}\left(\int_{\mathbb{R}^n}|Tf|^pv\right)^{1/p}\\
&\leq \sup_{\|v\|_{(q/p)'}=1}\left(\int_{\mathbb{R}^n}|f|^p\mathcal{M}v\right)^{1/p}\\
&\leq\sup_{\|v\|_{(q/p)'}=1}\|\mathcal{M}v\|_{L^{(\widetilde{q}/p)'}(\mathbb{R}^m)}^{1/p}\|f\|_{L^{\widetilde{q}}(\mathbb{R}^m)},
\end{aligned}
\end{eqnarray}
and so $\|T\|_{\widetilde{q}\rightarrow q}\leq\|\mathcal{M}\|_{(q/p)'\rightarrow(\widetilde{q}/p)'}^{1/p}$. Thus given such an operator $T$ and an index $p$, it is of particular interest to identify a corresponding geometrically defined maximal operator $\mathcal{M}$ which is \emph{optimal} in the sense that all ``interesting" $L^q\rightarrow L^{\widetilde{q}}$ bounds for $T$ may be obtained from those of $\mathcal{M}$ in this way.\footnote{One might interpret ``interesting" here as those which generate the full mapping properties of $T$ by duality considerations and interpolation with elementary inequalities.}

This endeavour has been enormously successful for broad classes of important operators $T$ in euclidean harmonic analysis, such as maximal averaging operators, fractional integral operators, Calder\'on--Zygmund singular integral operators and square functions. For example if $T$ is a standard Calder\'on--Zygmund singular integral operator, such as the Hilbert transform on the line, C\'ordoba and Fefferman \cite{CF} showed that for any $p,r>1$, there is a constant $C_{p,r}$ for which
$$
\int_{\mathbb{R}^n}|Tf|^pw\leq C_{p,r}\int_{\mathbb{R}^n}|f|^pM_rw.
$$
Here $M_rw:=(Mw^r)^{1/r}$, where $M$ denotes the Hardy--Littlewood maximal function on $\mathbb{R}^n$.\footnote{There are improvements of the above inequality due to Wilson \cite{W} and P\'erez \cite{P1}. We shall appeal to these in Section \ref{litpal}.} The modern description of such inequalities belongs to the theory of $A_p$ weights, and may be found in many sources; see for example \cite{S1}, \cite{Duo} and \cite{GCRdF}.

It is pertinent at this stage to mention one further particular example of this perspective.
Let $I_\alpha$ denote the fractional integral operator of order $\alpha$ on $\mathbb{R}$, defined via the Fourier transform by $\widehat{I_\alpha f}(\xi)=|\xi|^{-\alpha}\widehat{f}(\xi)$. In \cite{P2} P\'erez showed that for any $0<\alpha<1$, there is a constant $C_\alpha$ for which
\begin{equation}\label{fracint}
 \int_{\mathbb{R}}|I_{\alpha}f|^2w
\leq
C_{\alpha}\
\int_{\mathbb{R}}|f|^2M_{2\alpha}M^{2}w,
\end{equation}
where now $$M_\alpha w(x) =  \sup_{r>0}\frac{1}{r^{1-\alpha}}\int_{|x-y|<r}w(y)dy$$ is a certain fractional Hardy--Littlewood maximal function and $M^2=M\circ M$ denotes the composition of the Hardy--Littlewood maximal function $M$ with itself.\footnote{In \cite{P2} a similar statement is proved in all dimensions and for exponents $1<p<\infty$.} From the current perspective the factors of $M$ in the maximal operator $M_\alpha M^2$ are of secondary importance since $M_\alpha$ and $M_\alpha M^2$ share the same $L^p\rightarrow L^q$ mapping properties provided $1<p\leq\infty$. This follows from the $L^p\rightarrow L^p$ boundedness of $M$ for $1<p\leq\infty$.

It is of course natural to seek such weighted inequalities for other classes of operators which occupy a central place in harmonic analysis. Perhaps the most apparent context would be that of \emph{oscillatory} integral operators, and this has indeed received some notable attention in the literature. In particular, in the Proceedings of the 1978 Williamstown Conference on Harmonic Analysis, Stein \cite{S}
raised the possibility that the disc multiplier operator or Bochner--Riesz multiplier operators may be controlled by Kakeya or Nikodym type maximal functions via weighted $L^2$ inequalities of the above general form. Although this question posed by Stein (which is sometimes referred to as Stein's Conjecture) remains largely unsolved in all dimensions, it has generated a number of results of a similar nature (see for example \cite{CRS}, \cite{CS1}, \cite{CS2}, \cite{BRV}, \cite{CSe}, \cite{BCSV}, \cite{BBC3} and \cite{LRS}).

The aim of this paper is to establish a general result providing weighted norm inequalities of the form \eqref{gensetup} for oscillatory integral operators of convolution type on the line with phases of finite type. Our approach builds on work of Carbery, Soria, Vargas and the first author \cite{BCSV}.

\section{Statement of results}\label{statement}
Let $\ell\in\mathbb{N}$ satisfy $\ell\geq 2$ and $x_0\in\mathbb{R}$ be given. Suppose that
$\phi:\mathbb{R}\rightarrow\mathbb{R}$ is a smooth phase function satisfying the finite type condition
\begin{eqnarray}\label{hypotheses}
\phi^{(k)}(x_0) = 0 \quad\mbox{for}\quad 2 \leq k <\ell,
\quad\mbox{and}\quad \phi^{(\ell)}(x_0)\not=0.
\end{eqnarray}
As we shall clarify in Section \ref{properties}, this condition ensures that $\phi$ is close to the model phase $x\mapsto a+bx+c(x-x_0)^\ell$ in a neighbourhood of $x_0$. Here $a,b,c$ are real numbers.

For each $\lambda>0$ define a convolution kernel $K_\lambda:\mathbb{R}\rightarrow\mathbb{C}$ by
\begin{equation}\label{kerneldef}
K_\lambda (x) = e^{i\lambda\phi(x)} \psi(x),
\end{equation}
where $\psi$ is a smooth cutoff function supported in a small neighbourhood $U$ of $x_0$. Throughout we shall suppose that $U$ is sufficiently small so that $\phi^{(\ell)}$ is bounded below by a positive constant on $U$. We define the oscillatory integral operator $T_\lambda$ by
$$T_\lambda f(x)=K_\lambda*f(x)=\int_{\mathbb{R}}e^{i\lambda\phi(x-y)}\psi(x-y)f(y)dy.$$

Our main result is the following:
\begin{theorem}\label{two weighted theorem}
There exists a constant $C>0$ such that for all weight functions $w$ and $\lambda\geq 1$,
\begin{eqnarray}\label{two weighted}
\int_\mathbb{R} |T_\lambda f(x)|^2 w(x)dx \leq C
 \int_\mathbb{R} |f(x)|^2 M^2
\mathcal{M}_{\ell, \lambda} M^4 w(x-x_0)dx
\end{eqnarray}
where $M^k$ denotes the $k$-fold composition of the Hardy-Littlewood
maximal function $M$, and $\mathcal{M}_{\ell, \lambda}$ is given by
\begin{eqnarray*}
\mathcal{M}_{\ell,\lambda} w(x) = \sup_{(y,r) \in \Gamma_{\ell, \lambda}
(x)} \frac{1}{(\lambda r)^\frac{1}{\ell-1}} \int_{|y-y'| \leq r} w(y')dy'
\end{eqnarray*}
where $\Gamma_{\ell, \lambda} (x)$ is the region
\begin{eqnarray*}
\{ (y,r) : \lambda^{-1}<r\leq \lambda^{-\frac{1}{\ell}}, \quad\mbox{and}\quad |y-x|\leq
(\lambda r)^{-\frac{1}{\ell-1}} \}.
\end{eqnarray*}
\end{theorem}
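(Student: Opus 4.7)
The plan is to reduce the weighted inequality to a pointwise kernel bound, and then estimate that kernel through van der Corput combined with dyadic decomposition and smoothing. Begin by writing
\[
\int_\mathbb{R} |T_\lambda f|^2 w = \iint K_w(y, y') f(y') \overline{f(y)}\, dy\, dy',
\]
where $K_w(y, y') := \int_\mathbb{R} e^{i\lambda[\phi(x-y) - \phi(x-y')]} \psi(x-y) \psi(x-y') w(x)\, dx$ is Hermitian in $(y, y')$. The symmetrisation $|K_w(y, y') f(y')\overline{f(y)}| \leq \tfrac{1}{2}|K_w(y, y')|(|f(y)|^2 + |f(y')|^2)$, combined with $|K_w(y, y')| = |K_w(y', y)|$ and Fubini, reduces the theorem to the pointwise estimate
\[
\int_\mathbb{R} |K_w(y, y')|\, dy' \leq C\, M^2 \mathcal{M}_{\ell,\lambda} M^4 w(y - x_0).
\]

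Next, change variables $s = x - y$ and $h = y' - y$ to recast the inner integral as
\[
K_w(y, y+h) = \int_\mathbb{R} e^{i\lambda[\phi(s) - \phi(s-h)]} \psi(s)\psi(s-h)\, w(y+s)\, ds.
\]
The finite type hypothesis (together with the model-phase approximation promised in Section \ref{properties}) yields $|\partial_s^{\ell-1}\{\lambda[\phi(s) - \phi(s-h)]\}| \sim \lambda|h|$ uniformly on the support of $\psi$. Van der Corput's lemma therefore suggests the decay rate $(\lambda|h|)^{-1/(\ell-1)}$ in the principal oscillatory regime $\lambda^{-1} \lesssim |h| \lesssim \lambda^{-1/\ell}$, which is precisely the normalisation appearing in $\mathcal{M}_{\ell,\lambda}$. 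Outside this range the kernel is controlled by either the compact support of $\psi$ (when $|h| \gtrsim \lambda^{-1/\ell}$) or by trivial pointwise averages of $w$ (when $|h| \lesssim \lambda^{-1}$).

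One then dyadically decomposes the $h$-integral. Since van der Corput cannot be applied directly to the rough weight $w(y+s)$, at each scale $|h| \sim r$ we smooth $w$ by convolution with a bump at the uncertainty scale $(\lambda r)^{-1/(\ell-1)}$, apply van der Corput to the smooth principal part, and bound the error pointwise by $Mw$. The freedom in the centre of this averaging interval generates exactly the region $\Gamma_{\ell,\lambda}$, while the supremum over dyadic $r \in [\lambda^{-1}, \lambda^{-1/\ell}]$ assembles the maximal operator $\mathcal{M}_{\ell,\lambda}$. The nested factors $M^4$ (inside) and $M^2$ (outside) emerge from iterating this smoothing/dualisation procedure, with their final count dictated by the weighted maximal inequalities of Wilson \cite{W} and P\'erez \cite{P1} anticipated in Section \ref{litpal}.

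The principal obstacle is the interplay between oscillation and the roughness of $w$: van der Corput classically requires accompanying weights of bounded variation, whereas here $w$ is only locally integrable. Optimising the smoothing scale against the oscillation scale so that the resulting maximal operator is precisely $\mathcal{M}_{\ell,\lambda}$, without accumulating more than a bounded number of surrounding Hardy--Littlewood maximal functions, is the technical heart of the proof -- and the place where the techniques of \cite{BCSV} will be crucial.
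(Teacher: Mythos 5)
Your approach is genuinely different from the paper's, which proceeds via a two-stage Fourier decomposition of $f$ (dyadic Littlewood--Paley followed by an equally-spaced Rubio-de-Francia-type decomposition) combined with Observation~\ref{elobs}. But your very first reduction contains a gap that is fatal for $\ell>2$. The symmetrisation $|K_w(y,y')f(y')\overline{f(y)}|\leq\tfrac12|K_w(y,y')|(|f(y)|^2+|f(y')|^2)$ reduces the theorem to the pointwise bound $\int|K_w(y,y')|\,dy'\lesssim M^2\mathcal{M}_{\ell,\lambda}M^4w(y-x_0)$, but this inequality is false. Test it with $w\equiv1$ (on a neighbourhood of the support), $\phi(x)=x^3$, $\ell=3$, $x_0=0$. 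On the one hand $\mathcal{M}_{3,\lambda}\mathbf{1}\sim\lambda^{-2/3}$ there, so the claimed right-hand side is $\sim\lambda^{-2/3}$; this is exactly consistent with Plancherel, since $\|T_\lambda\|_{L^2\to L^2}=\|\widehat K_\lambda\|_\infty\sim\lambda^{-1/3}$ by the Airy-type estimate in Lemma~\ref{asymptotics k hat}. On the other hand, for $\lambda^{-1/3}\lesssim|h|\lesssim1$ the phase $s\mapsto\phi(s)-\phi(s-h)$ has a nondegenerate stationary point at $s=h/2$ inside the support of $\psi$, so stationary phase gives the two-sided estimate $|K_1(y,y+h)|\sim(\lambda|h|)^{-1/2}$, and hence $\int|K_1(y,y')|\,dy'\gtrsim\int_{\lambda^{-1/3}}^{1}(\lambda h)^{-1/2}\,dh\sim\lambda^{-1/2}\gg\lambda^{-2/3}$. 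So the Schur quantity is a strict overestimate: the step $|K_w f\overline f|\leq\tfrac12|K_w|(|f|^2+|f'|^2)$ already discards the oscillation of $K_w(y,y')$ in $y'$, which is essential to recover even the correct unweighted $L^2\to L^2$ operator norm.

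This is precisely what the paper's machinery is designed to avoid. The inequality \eqref{mol2} of Observation~\ref{elobs} only invokes Cauchy--Schwarz \emph{after} the Fourier support of $f$ has been localised to an interval of length $L=2^{-p/(\ell-1)}\lambda^{1/\ell}$ (Sections~\ref{LPreduction}--\ref{mainsec}), at which point $T_\lambda\Psi_{L,k}$ is genuinely a bump at scale $1/L$ with the correct amplitude (Lemma~\ref{heart}), and the pieces $T_\lambda f_k$ are almost orthogonal in $L^2(w_3)$ after the weight has been suitably mollified (Lemma~\ref{forcing orthogonality}). The orthogonality in $y'$ that symmetrisation destroys is exactly what is retained by decomposing $\widehat f$ first. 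A secondary but also real issue in your outline is the van der Corput step: splitting $w=\widetilde w+(w-\widetilde w)$ with $\widetilde w$ a mollification does not produce an error term that is ``bounded pointwise by $Mw$'' --- only $\widetilde w\leq Mw$ holds, while $|w-\widetilde w|$ can be of the same size as $w$ itself, so the contribution of the rough part carries no $\lambda$-gain. In the paper this problem is circumvented because the weight never enters a van der Corput amplitude at all; all oscillatory estimates are applied to the fixed Schwartz kernel $T_\lambda\Psi_{L,k}$.
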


The remainder of this section consists of a number of remarks on the context and implications of the above theorem, followed by an informal description of the ideas behind our proof.

First of all it should be noted that by translation-invariance it suffices to prove \eqref{two weighted} with $x_0=0$.

We will actually prove a uniform version of the above theorem under the following quantified version of the hypothesis \eqref{hypotheses}: Suppose that $\epsilon>0$ and
$(A_j)_{j\in\mathbb{N}}$ is a sequence of positive constants. In addition to \eqref{hypotheses}, suppose that
\begin{eqnarray}\label{hypothesesquant}
\phi^{(k)}(x_0) = 0 \quad\mbox{for}\quad 2 \leq k < \ell,
\quad\mbox{and}\quad \phi^{(\ell)}(x_0)\geq\epsilon,
\end{eqnarray}
and that
\begin{equation*}
\|\phi^{(j)}\|_\infty \leq A_j
\end{equation*}
for all $j\in\mathbb{N}$.
By the Mean Value Theorem, the neighbourhood $U$ of
$x_0$ may be chosen, depending only on $\epsilon$ and $A_{\ell+1}$, such that
$\phi^{\ell} \geq \epsilon/2$ on $U$. As may be seen from the proof, the constant $C$ in Theorem \ref{two weighted theorem} depends
only on $\epsilon$, $\ell$ and finitely many of the $A_j$. As might be expected, such uniform versions allow one to handle multivariable phases which satisfy the hypotheses of the theorem in one scalar variable uniformly in the remaining variables. We do not elaborate on this.

By modulating the input $f$ and output $T_\lambda f$ appropriately in \eqref{two weighted},
there is no loss of generality in strengthening the hypothesis \eqref{hypotheses} to
\begin{eqnarray}\label{hypothesessupweaker}
\phi^{(k)}(x_0) = 0 \quad\mbox{for}\quad 0 \leq k < \ell,
\quad\mbox{and}\quad \phi^{(\ell)}(x_0)\not=0.
\end{eqnarray} Similarly, \eqref{hypothesesquant} may be replaced with
 \begin{eqnarray}\label{hypothesesquantsupweaker}
\phi^{(k)}(x_0) = 0 \quad\mbox{for}\quad 0 \leq k < \ell,
\quad\mbox{and}\quad \phi^{(\ell)}(x_0)\geq\epsilon.
\end{eqnarray}
Notice that if $\phi$ satisfies the hypotheses \eqref{hypothesessupweaker} (or \eqref{hypothesesquantsupweaker}) and $\chi$ is a local diffeomorphism in a neighbourhood of $y_0\in\mathbb{R}$ with
$\chi(y_0) = x_0$, then the phase function $\phi
\circ \chi$ satisfies the hypotheses \eqref{hypothesessupweaker} (or \eqref{hypothesesquantsupweaker}) at the point
$y_0$.

The maximal function $\mathcal{M}_{\ell, \lambda}$ is a fractional Hardy--Littlewood maximal operator corresponding to an ``approach region", somewhat reminiscent of (yet different from) the maximal operators studied by Nagel and Stein in \cite{NS}. It should be noted that $\mathcal{M}_{\ell,\lambda}$ is universal in the sense that it depends only on the parameters $\ell$ and $\lambda$, and is otherwise independent of the phase $\phi$.

Since $\mathcal{M}_{\ell,\lambda}$ involves a fractional average, \eqref{two weighted} bears some resemblance to the two-weighted inequality for the fractional integral \eqref{fracint} due to P\'erez \cite{P2}. The root of this similarity lies in the fact that the Fourier multipliers for the operators $T_\lambda$ and $I_\alpha$ both exhibit ``power-like" decay. We remark that the factors of Hardy--Littlewood maximal function $M$ are of secondary importance in Theorem \ref{two weighted theorem} since for $1<p,q\leq\infty$, $M^2
\mathcal{M}_{\ell, \lambda} M^4$ and $\mathcal{M}_{\ell, \lambda}$ share the same $L^p\rightarrow L^q$ bounds (up to absolute constants). Several of these factors of $M$ do not appear to be essential and arise for technical reasons.

The model phase functions satisfying \eqref{hypotheses} are of course $\phi(x)=(x-x_0)^\ell$ for $\ell\geq 2$, although there are others of particular interest. For example the phase function $\phi(x)=\cos x$ (which clearly satisfies our hypotheses with $\ell=2$ and $\ell=3$, depending on the point $x_0$) arises naturally in the context of weighted inequalities for the Fourier extension operator associated with the circle $\mathbb{S}^1$ in $\mathbb{R}^2$.
The Fourier extension (or adjoint Fourier restriction) operator associated with $\mathbb{S}^1$ is the map
$
g\longmapsto \widehat{gd\sigma}$, where
$$\widehat{gd\sigma}(\xi)=\int_{\mathbb{S}^1}g(x)e^{ix\cdot\xi}d\sigma(x).$$ Here $\sigma$ denotes arclength measure on $\mathbb{S}^1$ and $g\in L^1(\mathbb{S}^1)$; thus $\widehat{gd\sigma}$ is simply the Fourier transform of the singular measure $gd\sigma$ on $\mathbb{R}^2$. It should be noted that the adjoint of this map is the restriction $f\mapsto \widehat{f}\bigl|_{\mathbb{S}^1}$, where again $\widehat{\;}$ denotes the Fourier transform on $\mathbb{R}^2$.
On parametrising $\mathbb{S}^1$, invoking a suitable partition of unity and applying Theorem \ref{two weighted theorem} with $\phi(x)=\cos x$ for both $\ell=2,3$, one may deduce the following theorem of Carbery, Soria, Vargas and the first author.
\begin{theorem}[\cite{BCSV}]\label{babystein}
There exists a constant $C>0$ such that for all Borel measures $\mu$ supported on $\mathbb{S}^{1}$ and $R\geq 1$,
\begin{equation*}\label{steinspherenolog}
\int_{\mathbb{S}^{1}}|\widehat{gd\sigma}(Rx)|^{2}d\mu(x)
\leq \frac{C}{R}\int_{\mathbb{S}^{1}}|g(\omega)|^{2}
M^2\mathfrak{M}_{R}M^{4}(\mu)(\omega) d\sigma(\omega),
\end{equation*}
where
\begin{equation*}
\mathfrak{M}_{R}\mu(\omega)=\sup_{\substack{ T\parallel \omega ,  \\
R^{-1}\leq \alpha \leq R^{-2/3}}}
\frac{\mu(T(\alpha,\alpha^{2}R))}{\alpha},
\end{equation*}
and $M$ is the Hardy--Littlewood maximal function on $\mathbb{S}^{1}$.
Here $T(\alpha,\beta)$ denotes a rectangle in the plane, of short side $\alpha$
and long side $\beta$.
\end{theorem}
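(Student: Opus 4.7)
The plan is to reduce Theorem~\ref{babystein} to Theorem~\ref{two weighted theorem} via an arc-length parametrization of $\mathbb{S}^{1}$, a partition of unity, and a pointwise comparison of the resulting line maximal operators with $\mathfrak{M}_{R}$. Writing $x=(\cos\alpha,\sin\alpha)$ and $\omega=(\cos\beta,\sin\beta)$, the identity $x\cdot\omega=\cos(\alpha-\beta)$ turns the extension operator into the convolution
\begin{equation*}
\widehat{gd\sigma}(Rx)=\int_{-\pi}^{\pi}g(\beta)\,e^{iR\cos(\alpha-\beta)}\,d\beta
\end{equation*}
on $\mathbb{R}/2\pi\mathbb{Z}$, with phase $\phi(u)=\cos u$ and oscillation parameter $\lambda=R$.

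Next I would introduce a finite smooth partition of unity $\sum_{j=1}^{N}\psi_{j}\equiv 1$ on $\mathbb{R}/2\pi\mathbb{Z}$, each $\psi_{j}$ supported in a small neighbourhood of one of the points $u_{j}\in\{0,\pi/2,\pi,3\pi/2\}$, decomposing the operator as $\sum_{j}T_{R}^{(j)}$ with $T_{R}^{(j)}g(\alpha)=\int g(\beta)e^{iR\cos(\alpha-\beta)}\psi_{j}(\alpha-\beta)\,d\beta$. Since $N$ is absolute, it suffices, via Cauchy--Schwarz, to bound each piece separately. At $u_{j}\in\{0,\pi\}$ one has $\phi''(u_{j})=\mp 1\neq 0$, so Theorem~\ref{two weighted theorem} applies with $\ell=2$; at $u_{j}\in\{\pm\pi/2\}$ one has $\phi''(u_{j})=0$ but $\phi'''(u_{j})=\pm 1\neq 0$, so it applies with $\ell=3$. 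In both cases the sharpened hypothesis~\eqref{hypothesessupweaker} absorbs the nonzero lower-order derivatives of $\cos$ at $u_{j}$ through harmless modulations of $g$ and its image, giving
\begin{equation*}
\int|T_{R}^{(j)}g(\alpha)|^{2}\,d\mu(\alpha)\leq C\int|g(\beta)|^{2}\bigl(M^{2}\mathcal{M}_{\ell_{j},R}M^{4}\mu\bigr)(\beta-u_{j})\,d\beta.
\end{equation*}

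The substantive remaining step is to dominate $\mathcal{M}_{\ell_{j},R}$ by $R^{-1}\mathfrak{M}_{R}$ under the arc-length identification of $\mathbb{S}^{1}$ with $\mathbb{R}/2\pi\mathbb{Z}$. For $\ell=3$ the change of variable $r=\alpha^{2}R$ turns the range $R^{-1}<r\leq R^{-1/3}$ in $\Gamma_{3,R}$ precisely into $R^{-1}<\alpha\leq R^{-2/3}$, while $(Rr)^{1/2}=\alpha R$; elementary geometry of the unit circle shows $\mathbb{S}^{1}\cap T(\alpha,\alpha^{2}R)$ is an arc of length $\asymp\alpha^{2}R$ whenever $\alpha\leq R^{-2/3}$, so one obtains $\mathcal{M}_{3,R}w\lesssim R^{-1}\mathfrak{M}_{R}\mu$ pointwise. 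For $\ell=2$ the range $r\leq R^{-1/2}$ is split at $r=R^{-1/3}$: the sub-range $r\leq R^{-1/3}$ is handled as for $\ell=3$, whilst the upper sub-range yields, via the trivial bound $\frac{1}{Rr}\int_{|y-y'|\leq r}w\leq R^{-1}Mw$, a contribution that is easily absorbed into one of the $M$ factors flanking $\mathfrak{M}_{R}$. The shift by $u_{j}$ in the maximal function is innocuous because every operator appearing on the $\mathbb{S}^{1}$-side is rotationally equivariant.

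Summing over the $O(1)$ pieces $j$ then yields Theorem~\ref{babystein}. I expect the main technical obstacle to be this last pointwise comparison, where the two families of tubes $T(\alpha,\alpha^{2}R)$ generated by the $\ell=2$ and $\ell=3$ applications must be shown to interlock into the single geometric maximal operator $\mathfrak{M}_{R}$, with the transitional scale $\alpha\sim R^{-2/3}$ (equivalently $r\sim R^{-1/3}$) playing a central role.
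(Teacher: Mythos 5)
Your plan matches the paper's own (very brief) indication of proof: arc-length parametrisation turning $\widehat{gd\sigma}(R\cdot)$ into convolution with $e^{iR\cos u}$, a finite partition of unity so that each piece falls under hypothesis \eqref{hypotheses} with $\ell=2$ away from $\pm\pi/2$ and $\ell=3$ near $\pm\pi/2$, an application of Theorem~\ref{two weighted theorem} to each piece, and a comparison of $\mathcal{M}_{\ell,R}$ with $R^{-1}\mathfrak{M}_R$. The paper gives no more detail than this one sentence, so at the level of strategy you have reproduced it. Two remarks are worth making.

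First, a small simplification you have missed: the paper observes that $\Gamma_{\ell,\lambda}(x)\subseteq\Gamma_{\ell',\lambda}(x)$ when $\ell\leq\ell'$, hence $\mathcal{M}_{2,R}w\leq\mathcal{M}_{3,R}w$ pointwise. Thus the $\ell=2$ pieces are automatically dominated by the $\ell=3$ pieces, and there is no need for your case-split at $r=R^{-1/3}$ or for the ``interlocking'' of the two families of tubes; only the single comparison $\mathcal{M}_{3,R}\lesssim R^{-1}\mathfrak{M}_R$ is needed.

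Second, and more substantively, your change-of-variables computation for that comparison addresses the size of the averaging interval but not the approach region. In $\Gamma_{3,R}(x)$ the centre $y$ of the ball of radius $r=\alpha^2R$ may sit at distance up to $(Rr)^{-1/2}=(\alpha R)^{-1}$ from $x$, and for $\alpha\leq R^{-2/3}$ this offset is \emph{larger} than the ball radius $\alpha^2R$ and larger than the tangential length of the rectangle $T(\alpha,\alpha^2R)$. A rectangle tangent to $\mathbb{S}^1$ at $\omega$ therefore does not contain the shifted arc; one must instead take a rectangle positioned at the offset point but still oriented parallel to the tangent at $\omega$ (the angular discrepancy $\sim(\alpha R)^{-1}$ costs only an extra $\sim\alpha$ of normal width, so this is consistent with the dimensions $T(O(\alpha),O(\alpha^2R))$). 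This works precisely because ``$T\parallel\omega$'' in the definition of $\mathfrak{M}_R$ is a directional constraint only, with no requirement that $T$ contain $\omega$; your write-up should make this explicit, since it is the reason the comparison $\mathcal{M}_{3,R}w\lesssim R^{-1}\mathfrak{M}_R w$ goes through and is where the geometry of the approach region actually enters.
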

Our proof of Theorem \ref{two weighted theorem} builds on that of Theorem \ref{babystein} and is a testament to the robustness of the approach developed in \cite{BCSV}.

The above application of Theorem \ref{two weighted theorem} used the fact that the phase under consideration satisfied Hypothesis \eqref{hypotheses} for different values of $\ell$ in different regions. A similar approach yields two-weighted inequalities associated with polynomial phases for example. The maximal operators that feature are linear combinations of translates of the operators $\mathcal{M}_{\ell, \lambda}$, determined by the local monomial structure of the polynomial.
We do not pursue this matter further here, although it is pertinent to note that the weighted bounds on $T_\lambda f$ provided by Theorem \ref{two weighted theorem} grow with the parameter $\ell$. It is straightforward to verify that if $\ell\leq\ell'$ then $\Gamma_{\ell, \lambda}(x)\subseteq
\Gamma_{\ell', \lambda}(x)$ for $x\in\mathbb{R}$. As a consequence
$$\mathcal{M}_{\ell, \lambda}w(x)\leq \mathcal{M}_{\ell', \lambda}w(x)$$
for all $x$.

Theorem \ref{two weighted theorem} is sharp in the sense that the simple duality argument \eqref{dualityarg} when applied to \eqref{two weighted} allows the interesting
$L^p(\mathbb{R})\rightarrow L^q(\mathbb{R})$ boundedness properties of the oscillatory integral operator $T_\lambda$ to be deduced from those of the
controlling maximal function $\mathcal{M}_{\ell, \lambda}$. For $\ell=3$ the bounds on $T_\lambda$ are already
known and follow from work of Greenleaf and Seeger -- \cite{GS1}\footnote{This is only explicit for $\ell=3$, although it is clear that their techniques extend to general $\ell$ in this setting.}. The central estimates are the following, from which all others may be obtained by interpolation with elementary estimates and duality.
\begin{propn}\label{maximal bounds}
For $\ell\geq 2$, there exists a constant $C>0$ such that
\begin{eqnarray}\label{bound on M intro}
\|\mathcal{M}_{\ell, \lambda}f\|_{(\frac{\ell}{2})'} \leq C \lambda^{-\frac{2}{\ell}}\|f\|_{(\frac{\ell}{2})'}
\end{eqnarray}
holds for all $f \in L^{(\frac{\ell}{2})'}(\mathbb{R})$ and $\lambda\geq 1$.
\end{propn}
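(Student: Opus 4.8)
When $\ell=2$ we have $(\ell/2)'=\infty$ and \eqref{bound on M intro} is immediate: for every $(y,r)\in\Gamma_{2,\lambda}(x)$ one has $(\lambda r)^{-1}\int_{|y-y'|\le r}|f|\le (\lambda r)^{-1}\,2r\,\|f\|_\infty=2\lambda^{-1}\|f\|_\infty$. So the plan concerns $\ell\ge 3$; put $p=(\ell/2)'=\ell/(\ell-2)$, and note for the bookkeeping below the coincidences $(p+1)/(\ell-1)=p-1=2/(\ell-2)$ and $\lambda^{-(p-1)}=\lambda^{-2p/\ell}=\lambda^{-2/\ell\cdot p}$, i.e.\ $\lambda^{-(p-1)/p}=\lambda^{-1/p'}=\lambda^{-2/\ell}$.

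First I would dispose of the contribution from balls essentially centred at $x$. Writing
$(\lambda r)^{-1/(\ell-1)}\int_{|x-y'|\le r}w=2\lambda^{-1/(\ell-1)}r^{(\ell-2)/(\ell-1)}\cdot\tfrac{1}{2r}\int_{|x-y'|\le r}w$
and using that $(\ell-2)/(\ell-1)>0$, so the supremum over $\lambda^{-1}<r\le\lambda^{-1/\ell}$ is controlled by the largest admissible radius $r=\lambda^{-1/\ell}$, one obtains the pointwise domination of the centred version of $\mathcal M_{\ell,\lambda}$ by $2\lambda^{-2/\ell}Mw$, which is bounded on every $L^p$, $p>1$. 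The genuine content of \eqref{bound on M intro} is therefore the off-centring $|y-x|\le(\lambda r)^{-1/(\ell-1)}$: at the fine scales $r\approx\lambda^{-1}$ this allows a displacement of size $\approx 1$, far larger than $r$, so the lazy device of enlarging $B(y,r)$ to a ball about $x$ destroys the gain.

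Next I would decompose the inner radius dyadically, $r\sim|Q|$ with $Q$ a dyadic interval of length in $[\lambda^{-1},\lambda^{-1/\ell}]$. Since $|Q|\le\lambda^{-1/\ell}$ forces $|Q|\le(\lambda|Q|)^{-1/(\ell-1)}$, for $(y,r)\in\Gamma_{\ell,\lambda}(x)$ with $r\sim|Q|$ the ball $B(y,r)$ is covered by $O(1)$ dyadic intervals of length $|Q|$ lying within $O\!\big((\lambda|Q|)^{-1/(\ell-1)}\big)$ of $x$; collecting scales this reduces the theorem to the bound
\[
\Big\|\sup\big\{\,a_Q:\ x\in\widehat Q\,\big\}\Big\|_{L^p(\mathbb R)}\ \lesssim\ \lambda^{-2/\ell}\,\|w\|_{L^p(\mathbb R)},\qquad a_Q:=(\lambda|Q|)^{-1/(\ell-1)}\!\int_Q w,
\]
the supremum over dyadic $Q$ with $|Q|\in[\lambda^{-1},\lambda^{-1/\ell}]$, where $\widehat Q$ is the $C(\lambda|Q|)^{-1/(\ell-1)}$-neighbourhood of $Q$, so that $|\widehat Q|\sim(\lambda|Q|)^{-1/(\ell-1)}$. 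This is a non-tangential, ``tent-type'' maximal function; note that because $a_Q$ is small (like $\lambda^{-1}\mathrm{avg}_Q w$) precisely on the fine cubes, whose tents are large, one cannot simply pull $\lambda^{-2/\ell}$ outside.

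The step I expect to be the real obstacle is establishing this \emph{at the endpoint $p=(\ell/2)'$ with no logarithmic loss}. The crude estimate $\big\|\sup\{a_Q:x\in\widehat Q\}\big\|_p^p\le\sum_Q a_Q^p|\widehat Q|=\sum_Q(\lambda|Q|)^{-(p+1)/(\ell-1)}\big(\int_Q w\big)^p$ splits over the $O(\log\lambda)$ dyadic scales, and — because $|Q|\in[\lambda^{-1},\lambda^{-1/\ell}]$ together with $|\widehat Q|\sim(\lambda|Q|)^{-1/(\ell-1)}$ places us exactly at the exponent where $(p+1)/(\ell-1)=p-1$ and the resulting geometric series in the scale has ratio one — each single scale already contributes the full $\lambda^{-2/\ell}$, leaving the spurious factor $(\log\lambda)^{1/p}$. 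Removing it requires using that a single weight cannot be extremal at all scales at once: the quantities $\Sigma_j=\sum_{|Q|=2^{-j}}\big(\int_Q w\big)^p$ are monotone decreasing in $j$ (superadditivity of $t\mapsto t^p$) and satisfy $\Sigma_j\le 2^{-j(p-1)}\|w\|_p^p$ by H\"older on each $Q$; playing these against the increasing scale weights closes the sum with $\lambda^{-(p-1)}=\lambda^{-2p/\ell}$ except in the borderline regime where the product is (nearly) constant in $j$, i.e.\ essentially $w$ a step function at resolution $\lambda^{-1/\ell}$, and there the ``$\sup\le\ell^p$-sum'' estimate is itself wasteful and must be replaced by the sharp single-scale maximal-function bound, which decays geometrically. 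Equivalently one proves directly the $L^p(\ell^p)$ inequality $\sum_j\|\mathcal M^{(j)}_{\ell,\lambda}w\|_p^p\lesssim\lambda^{-2p/\ell}\|w\|_p^p$, e.g.\ through a Calder\'on--Zygmund/stopping-time decomposition of $w$ adapted to the scale range $[\lambda^{-1},\lambda^{-1/\ell}]$, so that $\mathcal M^{(j)}_{\ell,\lambda}$ applied to an atom at scale $2^{-k}$ decays in $|j-k|$ and the bad sets telescope. In all approaches $p=(\ell/2)'$ is exactly the exponent at which the arithmetic balances with constant $\lambda^{-2/\ell}$; interpolating \eqref{bound on M intro} with the elementary bound $\|\mathcal M_{\ell,\lambda}\|_{\infty\to\infty}\lesssim\lambda^{-2/\ell}$ then yields the whole range $p\in[(\ell/2)',\infty]$.
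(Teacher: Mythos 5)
Your reductions (the trivial case $\ell=2$, the domination of the centred part by $\lambda^{-2/\ell}Mw$, and the dyadic ``tent'' reformulation with $a_Q=(\lambda|Q|)^{-1/(\ell-1)}\int_Q w$ and $|\widehat Q|\sim(\lambda|Q|)^{-1/(\ell-1)}$) are all sound, and you have correctly located the crux: at $p=(\ell/2)'$ every dyadic scale saturates the bound $\lambda^{-2p/\ell}\|w\|_p^p$, so the $\sup\le(\sum)^{1/p}$ device loses $(\log\lambda)^{1/p}$. But the proposal does not actually close this gap; it only names candidate mechanisms. The first mechanism --- playing the monotonicity of $\Sigma_j=\sum_{|Q|=2^{-j}}(\int_Q w)^p$ against the geometric weights $2^{j(p-1)}$ --- cannot work on its own: for $w=\mathbf{1}_{[0,1]}$ one has $\Sigma_j=2^{-j(p-1)}\|w\|_p^p$ \emph{exactly} at every scale, so every term of $\sum_j 2^{j(p-1)}\Sigma_j$ equals $\|w\|_p^p$ and the square-function bound is genuinely logarithmic for this weight, even though the conclusion of the proposition holds for it. You acknowledge this and fall back on the assertion that the single-scale operators then decay geometrically, ``equivalently'' the $L^p(\ell^p)$ inequality $\sum_j\|\mathcal M^{(j)}_{\ell,\lambda}w\|_p^p\lesssim\lambda^{-2p/\ell}\|w\|_p^p$ proved ``e.g.\ through a Calder\'on--Zygmund/stopping-time decomposition''. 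That inequality is plausible (it checks out on characteristic functions and on point masses at scale $\lambda^{-1}$), but it is precisely the content of the proposition and no proof of it is given: you would need to exhibit the decomposition of a general $w$, define the atoms, prove the decay of $\mathcal M^{(j)}$ on an atom at scale $2^{-k}$ in $|j-k|$, and control the exceptional sets, none of which is done. As written, the argument is a correct diagnosis followed by an unproven lemma.

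For comparison, the paper disposes of the endpoint by a different device: it majorises $\mathcal M_{\ell,\lambda}$ by a smooth non-tangential maximal operator $\widetilde{\mathcal M}_{\ell,\lambda}w(x)=\sup_{(y,r)}r(\lambda r)^{-1/(\ell-1)}|P_r*w(y)|$ over the enlarged region allowing all $0<r\le\lambda^{-1/\ell}$, rescales to $\lambda=1$, embeds this in the analytic family $\mathcal M_\ell^\beta(\phi)(x)=\sup r^{\ell\beta/(\ell-1)}|P_r*\phi(y)|$, and applies Stein's analytic interpolation between the trivial bound $\|\mathcal M_\ell^0\phi\|_\infty\lesssim\|\phi\|_\infty$ and an $H^1\to L^1$ bound for $\mathcal M_\ell^1$ verified by testing on atoms; the desired estimate sits at $\beta=1/p=(\ell-2)/\ell$. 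The $H^1$ endpoint is exactly what replaces your missing summation-over-scales argument: the cancellation of an atom is what produces the geometric decay across scales that you are trying to extract by real-variable means. If you want to complete your approach you should either prove your $L^p(\ell^p)$ inequality in full, or switch to an interpolation with a Hardy-space (or weak-type/restricted-type) endpoint as the paper does.
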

Theorem \ref{two weighted theorem} combined with Proposition \ref{maximal bounds} yields the following:
\begin{corollary}\label{recovering bounds}
For $\ell\geq 2$, there exists a constant $C>0$ such that
\begin{eqnarray}\label{corollary 2}
\|T_\lambda f \|_\ell \leq C \lambda^{-\frac{1}{\ell}} \|f\|_\ell,
\end{eqnarray}
holds for all $f \in L^\ell(\mathbb{R})$ and $\lambda\geq 1$.
\end{corollary}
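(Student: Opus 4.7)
The plan is to execute the duality argument sketched in \eqref{dualityarg} of the introduction, using Theorem \ref{two weighted theorem} as the weighted input and Proposition \ref{maximal bounds} together with the boundedness of the Hardy--Littlewood maximal function as the maximal ingredients. Concretely, I would take $p = 2$ and $q = \widetilde q = \ell$ in the template \eqref{dualityarg}, so that the relevant dual exponent becomes $(\ell/2)'$.

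First I would write
\begin{equation*}
\|T_\lambda f\|_{L^\ell(\mathbb{R})}^{2} = \sup_{\|v\|_{(\ell/2)'} = 1} \int_{\mathbb{R}} |T_\lambda f(x)|^{2} v(x)\, dx,
\end{equation*}
then apply Theorem \ref{two weighted theorem} with $w = v$ to bound the right-hand side by a constant times
\begin{equation*}
\sup_{\|v\|_{(\ell/2)'} = 1} \int_{\mathbb{R}} |f(x)|^{2}\, M^{2} \mathcal{M}_{\ell,\lambda} M^{4} v(x - x_0)\, dx.
\end{equation*}
Next I would apply H\"older's inequality in the exponents $\ell/2$ and $(\ell/2)'$, translation-invariance of $L^{(\ell/2)'}$, and the fact that $M$ is bounded on $L^{(\ell/2)'}(\mathbb{R})$ (trivially so when $\ell = 2$, since then $(\ell/2)' = \infty$). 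This reduces matters to the $L^{(\ell/2)'} \to L^{(\ell/2)'}$ norm of $\mathcal{M}_{\ell,\lambda}$, which is exactly what Proposition \ref{maximal bounds} controls by $C\lambda^{-2/\ell}$. Taking square roots yields the desired $\lambda^{-1/\ell}$ decay.

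In this setting there is no substantive obstacle, since all the analytic work has been absorbed into Theorem \ref{two weighted theorem} and Proposition \ref{maximal bounds}; the one point that deserves a brief sentence is the endpoint $\ell = 2$, where $(\ell/2)' = \infty$ and one must note that the relevant $L^\infty \to L^\infty$ bound for $\mathcal{M}_{2,\lambda}$ provided by Proposition \ref{maximal bounds} is genuinely the stationary-phase estimate $\|T_\lambda\|_{2\to 2} \lesssim \lambda^{-1/2}$ in disguise. Aside from that verification, the corollary follows immediately by combining the two previously established results through the duality scheme of \eqref{dualityarg}.
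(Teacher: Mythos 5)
Your argument is exactly the one the paper intends (and essentially leaves implicit): apply the duality template \eqref{dualityarg} with $p=2$, $q=\widetilde q=\ell$, feed in Theorem \ref{two weighted theorem}, and then absorb the factors of $M$ via $L^{(\ell/2)'}$-boundedness and invoke Proposition \ref{maximal bounds} for the $\lambda^{-2/\ell}$ decay, taking square roots at the end. Your remark about the endpoint $\ell=2$ (where $(\ell/2)'=\infty$ and the $L^\infty\to L^\infty$ bound on $\mathcal{M}_{2,\lambda}$ is immediate from the definition) is also correct and consistent with the paper's own observation that the $\ell=2$ case is elementary.
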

As in the statement of Theorem \ref{two weighted theorem}, the constants $C$ above depend only on $\epsilon$, $\ell$ and finitely many of the $A_j$.

Theorem \ref{two weighted theorem} (and thus Corollary \ref{recovering bounds}) is only truly significant for $\ell>2$. The case $\ell=2$ of Theorem \ref{two weighted theorem} may be handled by elementary methods using the local nature of the operator $T_\lambda$, Plancherel's theorem and a simple stationary phase estimate on $\widehat{K}_\lambda$.
Notice that if $\phi(x)=x^2$, then
$$e^{i\phi}*f(x)=\int_{\mathbb{R}}e^{i(x-y)^2}f(y)dy=e^{ix^2}\int_{\mathbb{R}}e^{-2ixy}(e^{iy^2}f(y))dy,$$
and so an inequality of the form
$$\int_{\mathbb{R}}|e^{i\phi}*f(x)|^2v(x)dx\leq C \int_{\mathbb{R}}|f(x)|^2u(x)dx$$ is equivalent to
$$\int_{\mathbb{R}}|\widehat{f}(x)|^2v(x)dx\leq C \int_{\mathbb{R}}|f(x)|^2u(x)dx,$$ which is of course a weighted $L^2$ inequality for the Fourier transform. Such inequalities are known when $u$ and $v$ are certain power weights (Pitt's inequality \cite{Pitt}), and generalisations thereof involving rearrangement-invariant conditions on the weights \cite{Muck}.

In the situation where the phase $\phi$ is homogeneous (that is, when $\phi(x)=x^\ell$ for some $\ell\geq 2$), a scaling and limiting argument allows one to pass from
the local inequality \eqref{two weighted} to the global inequality
\begin{eqnarray}\label{scaled two weighted}
\int_{\mathbb{R}} |e^{i(\cdot)^\ell} \ast f(x)|^2 dw(x) \leq C \int_{\mathbb{R}} |f(x)|^2 M^2 \widetilde{\mathcal{M}}_{\ell}M^4w(x) dx,
\end{eqnarray}
where the maximal function $\widetilde{\mathcal{M}}_\ell$ is given by
\begin{eqnarray*}
\widetilde{\mathcal{M}}_{\ell}w(x) = \sup_{(y,r)\in\Gamma_\ell(x)}\frac{1}{r^\frac{1}{\ell-1}} \int_{|y-y'| \leq r} w(y')dy',
\end{eqnarray*}
and
$$\Gamma_\ell(x)=\{ (y,r) : 0<r\leq 1, \quad\mbox{and}\quad |y-x|\leq
r^{-\frac{1}{\ell-1}} \}.
$$
Notice that \eqref{scaled two weighted} is only significant for $\ell>2$, since $\widetilde{\mathcal{M}}_{2}w \equiv 2\|w\|_\infty$.

\subsection*{The ideas behind the proof of Theorem \ref{two weighted theorem}}
Our proof of Theorem \ref{two weighted theorem} relies heavily on the convolution structure of the operator $T_\lambda$, with the Fourier transform playing a central role. Our strategy will involve decomposing the support of the Fourier transform of the input function $f$. It is therefore appropriate that we begin with the following elementary observation, which is a simple manifestation of the uncertainty principle.
\begin{Observation}\label{elobs}
Suppose $f\in L^1(\mathbb{R})$ is such that the support of $\widehat{f}$ is contained in a bounded subset $I\subset\mathbb{R}$, and choose $\Psi\in\mathcal{S}(\mathbb{R})$ such that $\widehat{\Psi}(\xi)=1$ for $\xi\in I$. Then
\begin{equation}\label{mol}\int |T_\lambda f|^2w\leq \|\Psi\|_1\int |T_\lambda f|^2|\Psi|*w\end{equation}
and
\begin{equation}\label{mol2}\int |T_\lambda f|^2w\leq \|T_\lambda\Psi\|_1\int |f|^2|T_\lambda\Psi|*w.\end{equation}
\end{Observation}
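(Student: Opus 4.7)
The argument rests on the reproducing identity supplied by the Fourier-support hypothesis. Since $\widehat{f}$ is supported in $I$ and $\widehat{\Psi}\equiv 1$ on $I$, I have $\widehat{\Psi*f}=\widehat{\Psi}\widehat{f}=\widehat{f}$, hence $f=\Psi*f$. Because $T_\lambda$ is itself convolution with $K_\lambda$, it commutes with convolution by $\Psi$, and therefore
$$T_\lambda f = K_\lambda*\Psi*f = \Psi*(T_\lambda f) = (T_\lambda\Psi)*f.$$
The two inequalities \eqref{mol} and \eqref{mol2} will fall out by applying a weighted Cauchy--Schwarz inequality to these two different representations of $T_\lambda f$.

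For \eqref{mol}, I would start from $T_\lambda f(x)=\int \Psi(x-y)\,T_\lambda f(y)\,dy$. Cauchy--Schwarz with respect to the finite measure $|\Psi(x-y)|\,dy$ gives
$$|T_\lambda f(x)|^2 \leq \|\Psi\|_1 \int |\Psi(x-y)|\,|T_\lambda f(y)|^2\,dy.$$
Multiplying by $w(x)$, integrating in $x$, and applying Fubini yields
$$\int |T_\lambda f|^2\, w \leq \|\Psi\|_1 \int |T_\lambda f(y)|^2 \left(\int|\Psi(x-y)|w(x)\,dx\right)dy,$$
and the inner integral is $(|\Psi|*w)(y)$ up to a harmless reflection of $\Psi$, which preserves both $\|\Psi\|_1$ and the pointwise size of $|\Psi|$. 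The second inequality \eqref{mol2} follows from the identical manipulation applied instead to $T_\lambda f(x)=\int (T_\lambda\Psi)(x-y)f(y)\,dy$; the only extra check needed is that $\|T_\lambda\Psi\|_1\leq \|K_\lambda\|_1\,\|\Psi\|_1$ is finite, which holds since $K_\lambda$ is bounded and compactly supported and $\Psi\in\mathcal{S}(\mathbb{R})$.

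There is essentially no obstacle here; the statement is genuinely ``elementary'' as the authors indicate, being no more than weighted Cauchy--Schwarz applied twice to a reproducing formula. The only point requiring any care is the reflection in identifying $\int|\Psi(x-y)|w(x)\,dx$ with $|\Psi|*w$, which is purely cosmetic and can be sidestepped by, for instance, choosing $\widehat{\Psi}$ to be even (so that $\Psi$ is even), which is always possible since one may enlarge $I$ to a symmetric set before imposing $\widehat{\Psi}\equiv 1$.
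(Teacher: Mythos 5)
Your argument is exactly the one the paper indicates: derive the reproducing identities $T_\lambda f=\Psi*(T_\lambda f)=(T_\lambda\Psi)*f$ from the Fourier-support hypothesis, then apply Cauchy--Schwarz (with respect to the measure $|\Psi(x-y)|\,dy$, resp.\ $|T_\lambda\Psi(x-y)|\,dy$) and Fubini. The remark about the reflection is correct but immaterial, as you note; the proposal matches the paper's proof.
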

Observation \ref{elobs} is a simple consequence of the identities $$T_\lambda f=\Psi*(T_\lambda f)=(T_\lambda \Psi)*f$$ combined with applications of the Cauchy--Schwarz inequality and Fubini's theorem.

Since we wish to use \eqref{mol2} to prove \eqref{two weighted}, there are two questions that we must address:
\begin{itemize}
\item[(i)] How do we decompose frequency space so that the resulting functions $|T_\lambda\Psi|$ have a clear geometric interpretation?
\item[(ii)] How do we then find sufficient (almost) orthogonality on $L^2(w)$ to allow us to put the pieces of the decomposition back together again?
\end{itemize}
The frequency decomposition that we employ comes in two stages. The first stage involves the use of fairly classical Littlewood--Paley theory to reduce to the situation where the support of $\widehat{f}$ is contained in a dyadic interval. This is natural as the multiplier $\widehat{K}_\lambda(\xi)$ has power-like decay as $|\xi|\rightarrow\infty$. This Fourier support restriction allows us to mollify the weight function $w$ via \eqref{mol}, and accounts for the integration in the definition of the maximal operator $\mathcal{M}_{\ell,\lambda}$. Since we wish to ultimately apply \eqref{mol2}, it is necessary for us to decompose the support of $\widehat{f}$ further, this time into intervals of equal length. A stationary phase argument then reveals that provided the scale of this finer frequency decomposition is sufficiently small, the corresponding objects $|T_\lambda\Psi|$ have a clear geometric interpretation (satisfying estimates similar to those of $\Psi$), allowing us to appeal to \eqref{mol2}.

This analysis provides us with weighted norm inequalities for $T_\lambda$ acting on functions $f$ with very particular Fourier supports. However, a decomposition of a general function $f$ (with unrestricted Fourier support) into such functions will not in general exhibit any (almost) orthogonality on $L^2(w)$. In order to overcome this obstacle we find an efficient way to dominate the weight $w$ by a further weight $w'$ which is sufficiently smooth for our decomposition to be almost orthogonal on $L^2(w')$. The process by which we pass to this larger weight involves a local supremum, and accounts for the presence of the approach region $\Gamma_{\ell,\lambda}$ in the definition of $\mathcal{M}_{\ell,\lambda}$.

This paper is organised as follows. In Section \ref{properties} we make some simple reductions and observations pertaining to the class of phase functions $\phi$ that we consider. In Section \ref{litpal} we establish the Littlewood--Paley theory that we shall need in the proof of Theorem \ref{two weighted theorem}. The proof of Theorem \ref{two weighted theorem} is provided in Sections \ref{LPreduction}, \ref{mainsec} and \ref{mainsecinf}, and the proof of Proposition \ref{maximal bounds} in Section \ref{corsection}.

\section{Properties of the phase}\label{properties}
In this section we recall and further develop the properties of the phase $\phi$ introduced in Section \ref{statement}.

Let $\epsilon>0$ and let $(A_j)_{j=0}^\infty$ be a sequence of positive real numbers.
An elementary calculation reveals that there is a constant $C_\epsilon$, depending on at most $\epsilon$ such that the pointwise inequality $$\mathcal{M}_{\ell,\epsilon\lambda}w\leq C_\epsilon\mathcal{M}_{\ell,\lambda}w$$ holds for all weight functions $w$. As a result we may assume without loss of generality that $\epsilon=1$ in \eqref{hypothesesquantsupweaker}.

Assuming, as we may, that $x_0=0$, the hypothesis \eqref{hypothesesquantsupweaker} on the phase $\phi$ becomes
\begin{equation}\label{hypothesesquantsupweaker0}
\phi^{(k)}(0) = 0 \quad\mbox{for}\quad 0 \leq k < \ell,
\quad\mbox{and}\quad \phi^{(\ell)}(0)\geq 1.
\end{equation}
For uniformity purposes we also assume that
\begin{equation}\label{upperboundsj}
\|\phi^{(j)}\|_\infty\leq A_j
\end{equation}
for each $j\in\mathbb{N}_0$. By the mean value theorem we may of course choose a neighbourhood $U$ of the origin, depending only on $A_{\ell+1}$, such that $\phi^\ell\geq 1/2$ on $U$, and insist that the cutoff function $\psi$ in \eqref{kerneldef} is supported in $U$.

Our final observation clarifies the sense in which $x\mapsto x^\ell$ is a model for the phase function $\phi$.
If $0 \leq k \leq \ell-1$, then by Taylor's theorem, for each fixed
$x$ we have
\begin{eqnarray*}
\phi^{(k)} (x) &=& \phi^{(k)}(0) + x\phi^{(k+1)}(0) + \dots +
x^{\ell-k}\phi^{(\ell)}(y_{x,k}) \\ &=&
x^{\ell-k}\phi^{(\ell)}(y_{x,k})
\end{eqnarray*}
for some $y_{x,k} \in (0,x)$.
Since $1/2\leq |\phi^{(\ell)}|\leq A_\ell$ on the support of $\psi$,
we have
\begin{equation}\label{upperlower}
\frac{1}{2}|x|^{\ell-k}\leq |\phi^{(k)}(x)|\leq A_\ell |x|^{\ell-k}
\end{equation}
for all $x$ in the support of $\psi$ and $0\leq k\leq \ell-1$.

\subsubsection*{Notation} Throughout this paper we shall write $A\lesssim B$ if there exists a constant $c$, possibly depending on finitely many of the $A_j$, such that $A\leq cB$. In particular, this constant will always be independent of $\lambda$, the function $f$ and weight function $w$.

\section{Some Weighted Littlewood-Paley Theory}\label{litpal}

In this section we collect together the weighted inequalities for Littlewood--Paley square functions that we will appeal to in our proof of Theorem \ref{two weighted theorem}. These results, although very classical in nature, do not appear to be readily available in the literature, and hence may be of some independent interest.

\subsection{Weighted inequalities for a dyadic square function}
Let
$Q:\mathbb{R}\rightarrow\mathbb{R}$ be a smooth function such that $\widehat{Q}$ is equal
to $1$ on $[-2,-1]\cup [1,2]$, vanishing outside $[-3, -\tfrac{3}{4}]\cup [\tfrac{3}{4},3]$, and such that
$$\sum_{k\in\mathbb{Z}}\widehat{Q}(2^{-k}x)=1$$ for all $x\not=0$.
For
each $k\in\mathbb{Z}$ let
the operator $\Delta_{k}$ be given by
$$
\widehat{\Delta_{k}f}(\xi)=\widehat{Q}(2^{-k}\xi)\widehat{f}(\xi),
$$
and define the
square function $S$ by
$$
Sf(x)=\left(\sum_{k}|\Delta_{k}f(x)|^{2}\right)^{1/2}.
$$
\begin{propn}\label{DyLP} For each weight $w$ on $\mathbb{R}$,
\begin{eqnarray}\label{forward}
\int_\mathbb{R} (Sf)^2 w \lesssim \int_\mathbb{R}
|f|^2 Mw
\end{eqnarray}
and
\begin{eqnarray}\label{backwards}
\int_\mathbb{R} |f|^2 w \lesssim \int_\mathbb{R}
(Sf)^2 M^3w,
\end{eqnarray}
where $M$ denotes the Hardy--Littlewood maximal function.
\end{propn}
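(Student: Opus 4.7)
The plan is to prove the two inequalities of Proposition \ref{DyLP} separately. The forward estimate \eqref{forward} is the sharp weighted $L^2$ bound for the dyadic Littlewood--Paley square function due to Wilson \cite{W}. Since $S$ may be viewed as a vector-valued Calder\'on--Zygmund operator $f\mapsto(\Delta_k f)_k$ with values in $\ell^2$, and the kernel of each $\Delta_k$ has vanishing mean (because $\widehat Q$ is supported away from $0$), Wilson's theorem gives the single factor of $M$ on the right-hand side, rather than the $M_r$ (with $r>1$) that would arise from a direct application of the C\'ordoba--Fefferman inequality.

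For the reverse estimate \eqref{backwards}, my plan is a duality argument. Introduce an auxiliary $\tilde Q\in\mathcal S(\mathbb R)$ with $\widehat{\tilde Q}\equiv 1$ on the support of $\widehat Q$ and supported in a slightly larger annulus, and let $\tilde\Delta_k$ denote the corresponding Fourier multiplier; since $\widehat{\tilde Q}\widehat Q=\widehat Q$ we obtain the reproducing formula $f=\sum_k\tilde\Delta_k\Delta_k f$. Pairing $f$ with $g=fw$ and using self-adjointness of $\tilde\Delta_k$ (as $\widehat{\tilde Q}$ is real), we write
\begin{equation*}
\|f\|_{L^2(w)}^2 = \langle f, fw\rangle = \sum_k \langle \Delta_k f,\, \tilde\Delta_k(fw)\rangle.
\end{equation*}
Applying Cauchy--Schwarz in $\ell^2$ pointwise and then Cauchy--Schwarz in $L^2$ with conjugate weights $M^3 w$ and $(M^3 w)^{-1}$ yields
\begin{equation*}
\|f\|_{L^2(w)}^2 \leq \Bigl(\int (Sf)^2\,M^3 w\Bigr)^{1/2}\Bigl(\int |\tilde S(fw)|^2 (M^3 w)^{-1}\Bigr)^{1/2},
\end{equation*}
which reduces \eqref{backwards} to the two-weight bound $\int |\tilde S(fw)|^2 (M^3 w)^{-1}\leq C\int |f|^2 w$.

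The main obstacle lies in proving this last two-weight inequality. A naive application of the forward estimate for $\tilde S$ would require the pointwise comparison $w\cdot M((M^3 w)^{-1})\leq C$, which can fail in general (for example when $w=\chi_{[0,1]}$, the iterated maximal function $M^3 w$ decays too slowly at infinity for $(M^3 w)^{-1}$ to behave well under a further application of $M$). Instead, one must appeal to P\'erez's sharper weighted $L^2$ bound for Calder\'on--Zygmund operators \cite{P1}, which gives $\int |Tf|^2 v \leq C\int|f|^2 M^3 v$ for any weight $v$ and CZ operator $T$, and apply its dual form to the vector-valued operator $(\tilde\Delta_k)_k$. The three iterations of $M$ appearing in \eqref{backwards} then correspond precisely to the $[p]+1=3$ factor in P\'erez's estimate at $p=2$.
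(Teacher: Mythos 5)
Your treatment of the forward inequality \eqref{forward} is fine (the paper in fact cites Wilson's intrinsic square function paper \cite{W2} rather than \cite{W}, but the statement does follow from either).

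For \eqref{backwards}, your route -- pairing $f$ with $fw$, the reproducing identity $f=\sum_k\tilde\Delta_k\Delta_k f$, and two applications of Cauchy--Schwarz to reduce to the two-weight bound $\int|\tilde S(g)|^2(M^3w)^{-1}\lesssim\int|g|^2w^{-1}$ -- is genuinely different from the paper's. The paper instead writes $f=T_0f+\dots+T_3f$ with $T_j=\sum_{k\in4\mathbb Z+j}\Delta_k$, introduces an auxiliary family $\Delta_k'$ so that the signed scalar operator $S_j^\epsilon=\sum\epsilon_k\Delta_k'$ satisfies the algebraic identity $S_j^\epsilon T_j^\epsilon=T_j$, applies the scalar Wilson--P\'erez bound to $S_j^\epsilon$ uniformly in $\epsilon$, and finishes by taking expectations and invoking Khinchine. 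The point of that design is precisely to stay in the scalar Calder\'on--Zygmund setting.

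In your last step, however, there is a real gap. Writing $T$ for the $\ell^2$-valued operator $g\mapsto(\tilde\Delta_k g)_k$, P\'erez's bound $\int|T\phi|^2v\lesssim\int|\phi|^2M^3v$ reads $T\colon L^2(M^3v)\to L^2(v;\ell^2)$, and dualising this yields a bound for the \emph{adjoint} $T^*\colon(h_k)\mapsto\sum_k\tilde\Delta_kh_k$, namely $\int|\sum_k\tilde\Delta_kh_k|^2(M^3v)^{-1}\lesssim\int\sum_k|h_k|^2v^{-1}$. This does not control $\int\sum_k|\tilde\Delta_kg|^2(M^3v)^{-1}$; setting $h_k=\tilde\Delta_kg$ produces $\sum_k\tilde\Delta_k^2g$ on the left (not $g$) and $\int(\tilde Sg)^2v^{-1}$ on the right, so the reduction is circular. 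The correct manoeuvre is the other way round: first apply an $\ell^2$-valued P\'erez bound to the \emph{adjoint} operator $T^*\colon L^2(\ell^2)\to L^2$, obtaining $\int|T^*(h_k)|^2v\lesssim\int\sum_k|h_k|^2M^3v$, and then dualise to recover $T\colon L^2(v^{-1})\to L^2((M^3v)^{-1};\ell^2)$, which is the estimate you want. In addition you are implicitly invoking an $\ell^2$-valued version of P\'erez's theorem; this is true, but it requires justification (for example, by the very Khinchine reduction that the paper uses, at which point your argument more or less collapses to theirs). You should either fill in these two points, or adopt the paper's deterministic reproducing identity $S_j^\epsilon T_j^\epsilon=T_j$ and randomisation, which keeps everything scalar.
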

\begin{proof}
The forward inequality \eqref{forward} is a straightforward consequence of a more general result of Wilson \cite{W2}.

The reverse inequality \eqref{backwards} may be reduced to a well-known weighted inequality for Calder\'on--Zygmund singular integrals due to Wilson \cite{W} and P\'erez \cite{P1} as follows.
For each $j=0,1,2,3$ let
$$T_j=\sum_{k\in 4\mathbb{Z}+\{j\}}\Delta_{k},$$
so that
$f=T_0f+T_1f+T_2f+T_3f$.

Let $\epsilon=(\epsilon_k)$ be a random sequence with $\epsilon_k\in\{-1,1\}$ for each $j\in\mathbb{Z}$, and define
$$T_j^\epsilon=\sum_{k\in 4\mathbb{Z}+\{j\}}\epsilon_k\Delta_{k}$$
for each $j=0,1,2,3$.

Now let $Q':\mathbb{R}\rightarrow\mathbb{R}$ be a smooth function such that $\widehat{Q'}$ is equal
to $1$ on the support of $\widehat{Q}$ and vanishing outside $[-4, -\tfrac{1}{2}]\cup [\tfrac{1}{2},4]$, and define the operator $\Delta_k'$ by
$$
\widehat{\Delta_{k}'f}(\xi)=\widehat{Q'}(2^{-k}\xi)\widehat{f}(\xi).
$$

Observe that if we set $$S_j^\epsilon=\sum_{k\in 4\mathbb{Z}+\{j\}}\epsilon_k\Delta_k',$$ then
$$
S_j^\epsilon T_j^\epsilon=T_j,
$$
since for each $j=0,1,2,3$, the supports of $\widehat{Q}$ and $\widehat{Q'}$ ensure that $\Delta'_{k'}\Delta_k=0$ whenever $k'$ and $k$ are distinct in $4\mathbb{Z}+\{j\}$.
Hence
$$
\int_{\mathbb{R}}|f|^2w\lesssim\sum_{j=0}^3\int_{\mathbb{R}}|T_jf|^2w
=\sum_{j=0}^3\int_{\mathbb{R}}|S_j^{\epsilon}T_j^\epsilon f|^2w.
$$

It is well-known that the operator $S_j^{\epsilon}$ is a standard Calder\'on--Zygmund singular integral operator uniformly in the sequence $\epsilon$, and so by \cite{W} and \cite{P1} we have that for each $1<p<\infty$,
$$\int_{\mathbb{R}}|S^\epsilon_jf|^pw\lesssim\int_{\mathbb{R}}|f|^pM^{[p]+1}w$$
uniformly in $\epsilon$ and $j=0,1,2,3$.
Thus in particular
$$
\int_{\mathbb{R}}|f|^2w\lesssim
\sum_{j=0}^3\int_{\mathbb{R}}|T_j^\epsilon f|^2M^3w
$$
uniformly in $\epsilon$. Inequality \eqref{backwards} now follows on taking expectations and using Khinchine's inequality.
\end{proof}
We remark that the above proof also yields the weighted $L^p$ inequality
\begin{eqnarray}\label{backwardsp}
\int_\mathbb{R} |f|^p w \lesssim \int_\mathbb{R}
(Sf)^p M^{[p]+1}w
\end{eqnarray}
for all $1<p<\infty$. It would be interesting to determine whether the power $[p]+1$ of the Hardy--Littlewood maximal operator may be reduced here.

\subsection{Weighted inequalities for an ``equally-spaced" square function}
Our second square function is associated with an equally-spaced frequency decomposition. The following result is a consequence of work of Rubio de Francia \cite{Rubio}.
\begin{propn}\label{LP 1}
For $L>0$, let $W_L$ be a function on $\mathbb{R}$ with $\supp\:
\widehat{W}_L \subset \{ x \in \mathbb{R} : |x| \leq 2L \}$, such
that
\begin{eqnarray*}
\sum_{k \in \mathbb{Z}} \widehat{W}_L (x-kL) = 1
\end{eqnarray*}
for all $x \in \mathbb{R}$. Suppose further that for each $N\in\mathbb{N}$,
\begin{eqnarray*}
|W_L (x)| \lesssim \frac{L}{(1+ L|x|)^N}
\end{eqnarray*}
for all $x \in \mathbb{R}$.
For a function $f$ on $\mathbb{R}$ and $k\in\mathbb{Z}$, define $f_k$ by $\widehat{f}_k(\xi) = \widehat{f}(\xi)\widehat{W}_L(\xi - kL)$. Then for any weight function $w$ on $\mathbb{R}$,
\begin{equation*}
\int_\mathbb{R} \sum_{k\in \mathbb{Z}} |f_k|^2 w \lesssim \int_\mathbb{R} |f|^2
|W_L| \ast w.
\end{equation*}
\end{propn}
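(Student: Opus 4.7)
The plan is to establish this inequality by a direct calculation using Poisson summation, which is essentially the $L^2$ weighted version of the near-orthogonality underlying the unweighted Littlewood--Paley theorem of Rubio de Francia. Setting $\mu_k(u) := e^{ikLu} W_L(u)$, we have $f_k = f * \mu_k$, and so
\begin{equation*}
\sum_{k\in\mathbb{Z}} \int_\mathbb{R} |f_k(x)|^2 w(x)\,dx = \iint f(y)\overline{f(z)} \Bigl(\sum_{k} e^{ikL(z-y)}\Bigr) K(y,z)\,dy\,dz,
\end{equation*}
where $K(y,z) := \int W_L(x-y)\overline{W_L(x-z)} w(x)\,dx$. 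The first step is to invoke the Poisson formula $\sum_{k} e^{ikLt} = \tfrac{2\pi}{L}\sum_{n} \delta(t - 2\pi n/L)$, collapsing the $z$-integral to a discrete sum:
\begin{equation*}
\sum_k \int |f_k|^2 w = \frac{2\pi}{L} \sum_{n\in\mathbb{Z}} \int f(y)\,\overline{f(y + 2\pi n/L)}\, C_n(y)\,dy, \qquad C_n(y) := K(y,\, y + 2\pi n/L).
\end{equation*}

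Next I would isolate the diagonal $n = 0$ term, for which $C_0(y)$ is (up to a sign in the convolution) equal to $(|W_L|^2 * w)(y)$. The pointwise bound $|W_L| \lesssim L$ yields $|W_L|^2 \lesssim L|W_L|$, so that
\begin{equation*}
\frac{2\pi}{L} \int |f|^2 C_0\,dy \lesssim \int |f|^2\,(|W_L|*w)\,dy,
\end{equation*}
which is the desired bound for the diagonal contribution.

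For the off-diagonal terms $n \neq 0$, I would apply the elementary inequality
\begin{equation*}
|f(y)\,f(y+2\pi n/L)| \leq \tfrac{1}{2}\bigl(|f(y)|^2 + |f(y + 2\pi n/L)|^2\bigr)
\end{equation*}
and translate $y$ in the second summand. The crucial ingredient is the uniform estimate
\begin{equation*}
\sum_{n\in\mathbb{Z}} |W_L(u - 2\pi n/L)| \lesssim L \qquad \text{uniformly in } u\in\mathbb{R},
\end{equation*}
which follows from the decay hypothesis $|W_L(u)| \lesssim L(1 + L|u|)^{-N}$ (choosing $N \geq 2$) and the elementary fact that $\sum_{n} (1 + |Lu - 2\pi n|)^{-N}$ is bounded uniformly in $u$. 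Substituting this into the defining integral gives
\begin{equation*}
\sum_{n} |C_n(y)| \leq \int w(x)\,|W_L(x-y)|\Bigl(\sum_{n} |W_L(x - y - 2\pi n/L)|\Bigr) dx \lesssim L\,(|W_L|*w)(y),
\end{equation*}
and an analogous bound for $\sum_{n} |C_n(y - 2\pi n/L)|$ after the corresponding change of variables. Assembling the diagonal and off-diagonal estimates completes the proof.

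The main obstacle is the off-diagonal Poisson bookkeeping: one must arrange the AM-GM splitting so that every off-diagonal contribution is dominated pointwise by $(|W_L| * w)(y)$ using only the bounded overlap (at spacing $2\pi/L$) that the decay $(1 + L|u|)^{-N}$ is tailored to produce. All other steps are routine manipulations.
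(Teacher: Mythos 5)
Your argument is correct and rests on the same two ingredients as the paper's proof: Poisson summation adapted to the lattice $L\mathbb{Z}$, and the bounded-overlap estimate $\sup_u\sum_n|W_L(u-n\,\tfrac{2\pi}{L})|\lesssim L$ coming from the decay hypothesis. What differs is the bookkeeping. The paper writes $f_k(x)=e^{2\pi i kLx}(f\,W_L(x-\cdot))\widehat{\ }\,(kL)$, applies Plancherel for Fourier series on an interval of length $1/L$, and then Poisson summation to identify the resulting periodic function; a single Cauchy--Schwarz with the overlap bound then yields the stronger \emph{pointwise} inequality $\sum_k|f_k(x)|^2\lesssim(|f|^2*|W_L|)(x)$, from which the proposition follows by pairing with $w$. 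You instead expand $\sum_k\int|f_k|^2w$ directly into a double integral against the kernel $\sum_k e^{ikL(z-y)}$ and invoke the Dirac-comb form of Poisson to collapse the $z$-integration onto the arithmetic progression $y+\tfrac{2\pi}{L}\mathbb{Z}$, then split diagonal/off-diagonal and close with AM--GM and the same overlap bound. Both routes are sound; the paper's version gets the pointwise estimate for free and sidesteps the distributional Dirac comb, which in your write-up would need a brief truncation-and-limit justification (e.g.\ replace $\sum_k$ by a Fej\'er-smoothed partial sum and pass to the limit) to make the term-by-term interchange of sum and integral rigorous, while your version exposes the near-orthogonality of the modulated pieces $\mu_k$ more transparently. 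One minor remark: your $C_0$ is $(|W_L(-\cdot)|^2*w)$ rather than $(|W_L|^2*w)$, and the same reflection appears in the off-diagonal sums, so strictly you bound by $|W_L(-\cdot)|*w$; this is harmless here since the decay hypothesis on $W_L$ is symmetric, and in fact the paper's final line $\sum_k|f_k|^2\lesssim|f|^2*|W_L|$ carries the same implicit reflection when paired against $w$.
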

\begin{proof}
Observe that
\begin{equation*}
f_k(x) = e^{2\pi i kLx} (f(\cdot)W_L (x-
\cdot))\widehat{\hspace{2mm}} (kL),
\end{equation*}
and so
\begin{eqnarray*}
\begin{aligned}
\sum_k |f_k(x)|^2 &= \sum_k
|(f(\cdot)W_L(x-\cdot))\widehat{\hspace{2mm}}(kL)|^2 \\ &= L \int_0^{1/L} \left| \sum_k e^{2\pi i kLy} (f(\cdot)W_L(x-\cdot))\widehat{\hspace{2mm}}(kL) \right|^2 dy,
\end{aligned}
\end{eqnarray*}
by Plancherel's theorem.

By the Poisson Summation formula,
\begin{eqnarray*}
\sum_k e^{2\pi i kLy} (f(\cdot)W_L(x-\cdot))\widehat{\hspace{2mm}}(kL) =\frac{1}{L} \sum_k f(y+k/L) W_L(x-y-k/L),
\end{eqnarray*}
and so
\begin{eqnarray*}
\begin{aligned}
\sum_k |f_k(x)|^2 &= \frac{1}{L} \int_0^{1/L} \left| \sum_k
f(y+k/L)W_L(x-y-k/L) \right|^2 dy \\ &\leq \frac{1}{L}
\int_{0}^{1/L} \sum_k |f(y+k/L)|^2|W_L(x-y-k/L)| \sum_{k'}
|W_L(x-y-k'/L)|dy \\ &= \frac{1}{L} \sum_k \int_{k/L}^{(k+1)/L}
\left( \sum_{k'}|W_L(x-z-(k-k')/L)| \right) |f(z)|^2 |W_L(x-z)| dz \\
&\leq\sup_{z'\in\mathbb{R}}\sum_{k'}|W_L(z'-k'/L)|\int_\mathbb{R} |f(z)|^2 |W_L(x-z)|dz \\ &\lesssim |f|^2 \ast
|W_L|(x).
\end{aligned}
\end{eqnarray*}
\end{proof}

\section{An initial Littlewood--Paley reduction}\label{LPreduction}
In this section we use the dyadic Littlewood--Paley theory from the previous section to reduce the proof of Theorem \ref{two weighted theorem} to a weighted inequality for functions $f$ with restricted Fourier support. The nature of the frequency restrictions is motivated by the following estimates relating to the Fourier transform of
$K_\lambda$.

\begin{lemma}\label{asymptotics k hat}
\begin{eqnarray}\label{supdecay}
\sup_{y\in\mathbb{R}}\left|\int_{-\infty}^yK_\lambda(x)e^{-ix\xi}dx\right| \lesssim \left\{
\begin{array}{c}
\lambda^{-\frac{1}{\ell}}, \quad |\xi| \leq \lambda^{\frac{1}{\ell}} \\
\lambda^{-\frac{1}{2(\ell-1)}}|\xi|^{-\frac{\ell-2}{2(\ell-1)}}, \quad |\xi| > \lambda^{\frac{1}{\ell}}
\end{array} \right.
\end{eqnarray}
Moreover, for each $k,N\in\mathbb{N}$,
\begin{equation}\label{rapiddecay}\Bigl|\Bigl(\frac{d}{d\xi}\Bigr)^k\widehat{K}_\lambda(\xi)\Bigr|\lesssim |\xi|^{-N}\end{equation} for all $|\xi|\geq 2A_1\lambda$.
The implicit constants above depend on $k$, $N$, $\ell$ and finitely many of the $A_j$.
\end{lemma}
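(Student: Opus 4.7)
The kernel's Fourier integrand is $\psi(x)e^{i\Phi(x)}$ with $\Phi(x):=\lambda\phi(x)-x\xi$, and by \eqref{upperlower} we have $|\Phi^{(\ell)}(x)|=\lambda|\phi^{(\ell)}(x)|\geq\lambda/2$ on $\supp\psi$, while $|\Phi^{(k)}(x)|\sim\lambda|x|^{\ell-k}$ for $2\leq k\leq\ell-1$. In the first regime $|\xi|\leq\lambda^{1/\ell}$ of \eqref{supdecay}, I would simply apply the van der Corput lemma of order $\ell$ in its cutoff/maximal form: since $|\Phi^{(\ell)}|\geq\lambda/2$ uniformly on $\supp\psi$, this gives
\begin{equation*}
\sup_y\left|\int_{-\infty}^y\psi(x)e^{i\Phi(x)}\,dx\right|\lesssim\lambda^{-1/\ell},
\end{equation*}
uniformly in $\xi$ and $y$.

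For the second regime $|\xi|>\lambda^{1/\ell}$, the plan is to perform a stationary-phase split. Since $\phi'$ is strictly monotonic on each connected component of $\supp\psi\setminus\{0\}$, the equation $\lambda\phi'(x_*)=\xi$ has at most one solution $x_*$ per component, with $|x_*|\sim(|\xi|/\lambda)^{1/(\ell-1)}$ by \eqref{upperlower}. If no such $x_*$ lies in $\supp\psi$, then $|\Phi'|\gtrsim|\xi|$ throughout and one integration by parts gives the stronger bound $|\xi|^{-1}$. When $x_*$ does lie in $\supp\psi$, set $\rho:=(\lambda|x_*|^{\ell-2})^{-1/2}=\lambda^{-1/(2(\ell-1))}|\xi|^{-(\ell-2)/(2(\ell-1))}$, the natural stationary-phase width, and split $\supp\psi$ into a near zone $\{|x-x_*|\leq\rho\}$ and its complement. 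The near zone contributes $O(\rho)$ to $\int_{-\infty}^y$ trivially, regardless of $y$. On the far zone, a short case analysis based on whether $|x|$ is comparable to $|x_*|$ or not yields $|\Phi'(x)|\gtrsim\rho^{-1}$, and $\Phi'$ is monotonic on each far-zone component (since $\phi''$ has a definite sign there). The maximal first-derivative van der Corput lemma then bounds the contribution of each far-zone component by $O(\rho)$, and summing gives the second bound in \eqref{supdecay}.

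For \eqref{rapiddecay}, when $|\xi|\geq 2A_1\lambda$ the bound $\|\phi'\|_\infty\leq A_1$ gives $|\Phi'(x)|\geq|\xi|-A_1\lambda\geq|\xi|/2$ on $\supp\psi$, while $|\Phi^{(j)}(x)|\leq A_j\lambda\leq A_j|\xi|/(2A_1)$ for $j\geq 2$. Starting from
\begin{equation*}
\left(\frac{d}{d\xi}\right)^k\widehat{K}_\lambda(\xi)=\int(-ix)^k\psi(x)e^{i\Phi(x)}\,dx,
\end{equation*}
I would iterate the identity $e^{i\Phi(x)}=(i\Phi'(x))^{-1}\frac{d}{dx}e^{i\Phi(x)}$ and integrate by parts $N$ times; each step gains a factor $|\Phi'|^{-1}\lesssim|\xi|^{-1}$, while the derivatives that accumulate on the smooth compactly supported amplitude $(-ix)^k\psi(x)$ and on $(\Phi')^{-1}$ remain controlled in terms of $k$, $N$, $\ell$, and finitely many of the $A_j$ (any $\lambda$ produced by differentiating $\Phi$ is absorbed by $\lambda/|\xi|\leq 1/(2A_1)$). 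The main obstacle will be the stationary-phase analysis in the second regime of \eqref{supdecay}: one must verify that the choice of $\rho$ exactly balances the trivial near-zone bound against the first-derivative van der Corput bound on the far zone, and that the maximal (sup-in-$y$) versions of both ingredients patch together uniformly in $y$.
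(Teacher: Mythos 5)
Your argument for the first bound in \eqref{supdecay} (order-$\ell$ van der Corput applied to $h=\lambda\phi-\xi\cdot$) and for \eqref{rapiddecay} (no stationary points for $|\xi|\geq 2A_1\lambda$, then repeated integration by parts) is exactly what the paper does.

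For the second bound in \eqref{supdecay} you take a genuinely different, lower-level route. The paper cuts the interval at $|x|\sim c\,|\xi/\lambda|^{1/(\ell-1)}$ with $c$ small, so that the stationary point lies entirely in the \emph{outer} region $I\setminus I'$. On the inner region $I'$ one has $|h'|\gtrsim|\xi|$ and applies \emph{first}-derivative van der Corput; on $I\setminus I'$ one has $|h''|\gtrsim\lambda^{1/(\ell-1)}|\xi|^{(\ell-2)/(\ell-1)}$ and applies \emph{second}-derivative van der Corput, which immediately produces the width $\rho$. You instead isolate the stationary point $x_*$, use the trivial bound $O(\rho)$ on $\{|x-x_*|\leq\rho\}$, and first-derivative van der Corput with $|\Phi'|\gtrsim\rho^{-1}$ on the complement. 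This is in effect reproving the second-order van der Corput inequality by hand; it works, and produces the same $\rho$, but the paper's split is a little more robust: because the cut is at $|x|\sim c|x_*|$ rather than $|x-x_*|\sim\rho$, one never has to locate $x_*$, handle the possibility of one stationary point on each side of $0$ when $\ell$ is odd, or worry about whether $\Phi'$ is monotonic across a far-zone component that straddles the origin. Your claim that ``$\Phi'$ is monotonic on each far-zone component (since $\phi''$ has a definite sign there)'' is not quite right on a component containing $0$ when $\ell$ is odd, since then $\phi''$ changes sign at $0$; this is repaired by inserting one additional split at $x=0$, but it is a genuine bookkeeping point your sketch elides, and it is exactly the kind of nuisance the paper's choice of cut avoids. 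Otherwise your proposal is correct, and the balance $\rho=(\lambda|x_*|^{\ell-2})^{-1/2}=\lambda^{-1/(2(\ell-1))}|\xi|^{-(\ell-2)/(2(\ell-1))}$ matches the stated estimate.
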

\begin{proof}
The two estimates \eqref{supdecay} follow from corresponding estimates
on the integral
\begin{eqnarray*}
\int_I e^{i(\lambda\phi(x) - x\xi)} dx
\end{eqnarray*}
that are uniform in $I$, where $I$ is an interval contained in
the support of $\psi$. This shall be achieved by routine applications of van der Corput's lemma.

Writing $h(x)=\lambda\phi(x)-x\xi$ we see that $h^{(\ell)}(x)=\lambda\phi^{(\ell)}(x)\geq\lambda/2$ for all $x\in I$ and so
\begin{eqnarray*}
\left|\int_I e^{i(\lambda\phi(x) - x\xi)} dx \right| \lesssim \lambda^{-\frac{1}{\ell}}
\end{eqnarray*}
for all $\xi\in\mathbb{R}$, with implicit constant depending only on $\ell$.

For the second estimate, let $I' = \{ x \in I : |x| \lesssim
|\xi/\lambda|^{\frac{1}{\ell-1}}\}$, with suitably small implicit constant depending on $A_\ell$.
By \eqref{upperlower}, $|h'(x)|\gtrsim |\xi|$ for all $x\in I'$, and so
\begin{eqnarray*}
\left|\int_{I'} e^{i(\lambda\phi(x) - x\xi)} dx \right| \lesssim
|\xi|^{-1}.
\end{eqnarray*}
For $x\in I\backslash I'$ we have
$|h''(x)|\gtrsim \lambda^{\frac{1}{\ell-1}}\xi^{\frac{\ell-2}{\ell-1}}$, so that
\begin{eqnarray*}
\left|\int_{I\backslash I'} e^{i(\lambda\phi(x) - x\xi)} dx \right| \lesssim
\lambda^{-\frac{1}{2(\ell-1)}} |\xi|^{-\frac{\ell-2}{2(\ell-1)}}.
\end{eqnarray*}
Now, if
$\lambda^{1/\ell} \leq |\xi|$ we have $|\xi|^{-1} \leq
\lambda^{-\frac{1}{2(\ell-1)}} |\xi|^{-\frac{\ell-2}{2(\ell-1)}}$, and so
the second estimate in \eqref{supdecay} is complete.

In order to establish \eqref{rapiddecay} observe that if $|\xi| \geq 2A_1\lambda$ then the phase $h$ has no stationary points and moreover
$|h'(x)|\gtrsim |\xi|$ uniformly in $x$. Inequality \eqref{rapiddecay} now follows by repeated integration by parts.
\end{proof}

Given Lemma \ref{asymptotics k hat} it is natural to define the sets $(\mathcal{A}_p)_{p=0}^\infty$ by
$$
\mathcal{A}_0= \{ \xi \in \mathbb{R} : |\xi| \leq
\lambda^{1/\ell}\}
$$
and
$$
\mathcal{A}_p = \{ \xi \in \mathbb{R} : 2^{p-3}\lambda^{1/\ell}<|\xi| \leq 2^{p+1}\lambda^{1/\ell} \}
$$
for $p\geq 1$.
By construction the sets $\mathcal{A}_p$ cover $\mathbb{R}$ with bounded multiplicity.
\begin{propn}\label{frequency restricted}
If the support of $\widehat{f}$ is contained in $\mathcal{A}_p$ then for all weight functions $w$ and $\lambda\geq 1$,
\begin{equation}\label{two weighted restricted}
\int_\mathbb{R} |T_\lambda f(x)|^2 w(x)dx \lesssim
 \int_\mathbb{R} |f(x)|^2 M
\mathcal{M}_{\ell, \lambda} M w(x)dx
\end{equation}
uniformly in $p$, where
$\mathcal{M}_{\ell, \lambda}$ is as in the statement of Theorem \ref{two weighted theorem}.
\end{propn}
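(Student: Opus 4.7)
The plan is to carry out the two-stage frequency strategy described at the end of Section 2, following the approach of \cite{BCSV}. The Fourier localisation $\widehat{f}\subset\mathcal{A}_p$ first allows mollification of $w$ via \eqref{mol}: I choose $\Psi_p\in\mathcal{S}(\mathbb{R})$ with $\widehat{\Psi_p}=1$ on a slight enlargement of $\mathcal{A}_p$ and $\|\Psi_p\|_1\lesssim 1$, so that $\int|T_\lambda f|^2w\lesssim\int|T_\lambda f|^2|\Psi_p|\ast w$, and observe that $|\Psi_p|\ast w$ is pointwise dominated by a Hardy--Littlewood average of $w$ at the scale $r_p:=(2^p\lambda^{1/\ell})^{-1}$ (modulo Schwartz tails absorbed by a factor of $M$). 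This is the ``integration'' scale inside $\mathcal{M}_{\ell,\lambda}$. I then pass to the local-supremum majorant
$$w_p^\#(x):=\sup_{|h|\leq\ell_p}|\Psi_p|\ast w(x+h),\qquad\ell_p:=(\lambda r_p)^{-1/(\ell-1)}=2^{p/(\ell-1)}\lambda^{-1/\ell},$$
so that the constraint $(y,r_p)\in\Gamma_{\ell,\lambda}(x)$ reads precisely $|y-x|\leq\ell_p$. A direct calculation then yields $w_p^\#\lesssim\lambda^{2/\ell}2^{p(\ell-2)/(\ell-1)}\,M\mathcal{M}_{\ell,\lambda}Mw$, uniformly for $p$ in the range where $r_p\in[\lambda^{-1},\lambda^{-1/\ell}]$.

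I then decompose $\mathcal{A}_p$ into equally-spaced subintervals $I_k^p$ of length $L_p:=\ell_p^{-1}$ via Proposition \ref{LP 1}, set $\widehat{f_k^p}(\xi):=\widehat{f}(\xi)\widehat{W_{L_p}}(\xi-kL_p)$, and let $\Psi_k(x):=e^{ikL_px}\Phi_p(x)$ with $\widehat{\Phi_p}=1$ on a slight enlargement of $[-L_p,L_p]$. The crucial step is the almost-orthogonality estimate
$$\int|T_\lambda f|^2w_p^\#\lesssim\sum_k\int|T_\lambda f_k^p|^2w_p^\#,$$
which I obtain by dominating $w_p^\#$ pointwise (up to absolute constants) by a convolution $\Theta_p\ast w_p^\#$ with $\widehat{\Theta_p}$ supported in $[-L_p/4,L_p/4]$; the slow variation of $w_p^\#$ at the matching scale $\ell_p=L_p^{-1}$ (built in by the local supremum) legitimises this step, and Plancherel against the Fourier-disjoint supports of $\widehat{T_\lambda f_k^p}$ delivers the orthogonality. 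Applying \eqref{mol2} to each piece with $\Psi_k$ and invoking the stationary-phase bound
$$|T_\lambda\Psi_k(x)|\lesssim\delta_pL_p\bigl(1+L_p|x-u_*(kL_p)|\bigr)^{-N},\qquad\delta_p:=(\lambda|\phi''(u_*)|)^{-1/2}\sim\lambda^{-1/\ell}2^{-p(\ell-2)/(2(\ell-1))},$$
where $u_*=u_*(kL_p)$ is the unique stationary point of $u\mapsto\lambda\phi(u)-kL_pu$ on $\supp\psi$ and satisfies $|u_*|\sim(kL_p/\lambda)^{1/(\ell-1)}\lesssim\ell_p$, gives $\|T_\lambda\Psi_k\|_1\lesssim\delta_p$ and $|T_\lambda\Psi_k|\ast w_p^\#\lesssim\delta_p\,w_p^\#$ (after enlarging the local-supremum window by a bounded factor). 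Summing via Proposition \ref{LP 1} applied to $f$ yields $\int|T_\lambda f|^2w\lesssim\delta_p^2\int|f|^2w_p^\#$, and the exact cancellation $\delta_p^2\cdot\lambda^{2/\ell}2^{p(\ell-2)/(\ell-1)}\sim 1$ reduces this to the target $\int|f|^2M\mathcal{M}_{\ell,\lambda}Mw$.

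The principal technical obstacle is the almost-orthogonality step: one must verify rigorously that, although $w_p^\#$ is not itself Fourier-localised, it is pointwise comparable (up to absolute constants) to a positive function whose Fourier transform is supported in $[-L_p/4,L_p/4]$; this hinges on the slow variation of a local supremum at precisely the scale $\ell_p$ matched to the equally-spaced decomposition. As a secondary matter, the high-frequency regime $p\gtrsim(1-\ell^{-1})\log_2\lambda$ where $r_p<\lambda^{-1}$ lies outside $\Gamma_{\ell,\lambda}$ and must be handled separately, using the rapid decay \eqref{rapiddecay} of $\widehat{K_\lambda}$ to yield a strongly summable contribution absorbable into $Mw$.
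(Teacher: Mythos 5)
Your proposal follows the paper's argument essentially step for step: a first mollification of $w$ at scale $r_p=(2^p\lambda^{1/\ell})^{-1}$ via \eqref{mol}, a local supremum at the matched scale $\ell_p=(\lambda r_p)^{-1/(\ell-1)}$, pointwise domination of the resulting weight by a band-limited majorant (the paper's Lemma \ref{forcing orthogonality}) to obtain almost-orthogonality of the equally-spaced decomposition at scale $L_p=\ell_p^{-1}$, a stationary-phase estimate on $T_\lambda\Psi_{L,k}$ (Lemma \ref{heart}), the Rubio de Francia square-function bound (Proposition \ref{LP 1}), and finally the comparison with $M\mathcal{M}_{\ell,\lambda}Mw$; the high-frequency regime is likewise dispatched by the rapid decay \eqref{rapiddecay}. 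Two small inaccuracies do not affect the outcome: the stationary point $u_*$ of $y\mapsto\lambda\phi(y)-kL_py$ need not be unique (there are at most $\ell-1$ of them, all of size $\sim 1/L_p$, so the pointwise bound and $L^1$-norm of $T_\lambda\Psi_{L,k}$ are unchanged), and your cut-off for the error regime ($r_p<\lambda^{-1}$) is slightly wider than the paper's ($r_p\leq(4A_1\lambda)^{-1}$); the intermediate range $(4A_1\lambda)^{-1}\leq r_p\leq\lambda^{-1}$ lies outside $\Gamma_{\ell,\lambda}$ but must still be treated by the main argument (the paper enlarges to $r'=\lambda^{-1}$ in \eqref{james}--\eqref{george}), since $\widehat{K}_\lambda$ has no rapid decay there.
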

Before we come to the proof of Proposition \ref{frequency restricted} we show how it implies Theorem \ref{two weighted theorem}.

Suppose $f:\mathbb{R}\rightarrow\mathbb{C}$ has unrestricted Fourier support and observe that for each $k\in\mathbb{Z}$ the Fourier transform of $\Delta_k f$ is supported in $\mathcal{A}_p$ for some $p$. Thus, by Proposition \ref{DyLP}, Proposition \ref{frequency restricted}, followed by Proposition \ref{DyLP} again,
\begin{eqnarray*}
\begin{aligned}
\int_{\mathbb{R}}|T_\lambda f(x)|^2w(x)dx&\lesssim\int_{\mathbb{R}}|ST_\lambda f(x)|^2M^3w(x)dx\\
&=\int_{\mathbb{R}}\sum_{k\in\mathbb{Z}} |\Delta_kT_\lambda f|^2M^3w(x)dx\\
&=\int_{\mathbb{R}}\sum_{k\in\mathbb{Z}} |T_\lambda\Delta_jf|^2M^3w(x)dx\\
&\lesssim \int_{\mathbb{R}}\sum_{k\in\mathbb{Z}} |\Delta_j f|^2M\mathcal{M}_{\ell,\lambda}M^4w(x)dx\\
&=\int_{\mathbb{R}}(Sf)^2M\mathcal{M}_{\ell,\lambda}M^4w(x)dx\\
&\lesssim \int_{\mathbb{R}}|f|^2M^2\mathcal{M}_{\ell,\lambda}M^4w(x)dx,
\end{aligned}
\end{eqnarray*}
which is the conclusion of Theorem \ref{two weighted theorem}.

It thus remains to prove Proposition \ref{frequency restricted}. By the definition of the sets $\mathcal{A}_p$ and Lemma \ref{asymptotics k hat} we have that
$$|\widehat{K}_\lambda(\xi)|\lesssim \lambda^{-\frac{1}{\ell}}2^{-\frac{p(\ell-2)}{2(\ell-1)}}$$ for all $\xi\in\mathcal{A}_p$ with $1\leq 2^p<4A_1\lambda^{(\ell-1)/\ell}$, and for each $N\in\mathbb{N}$, \begin{equation}\label{rapidap}|\widehat{K}_\lambda(\xi)|\lesssim |\xi|^{-N}\end{equation} for all $\xi\in\mathcal{A}_p$ with $2^p\geq 4A_1\lambda^{(\ell-1)/\ell}$. We therefore divide the proof of Proposition \ref{frequency restricted} into two cases. Section \ref{mainsec} is devoted to the more interesting case $1\leq 2^p<4A_1\lambda^{(\ell-1)/\ell}$, while Section \ref{mainsecinf} handles the ``error terms" corresponding to the case $2^p\geq 4A_1\lambda^{(\ell-1)/\ell}$.

\section{The proof of Proposition \ref{frequency restricted} for $1\leq 2^p<4A_1\lambda^{(\ell-1)/\ell}$}\label{mainsec}
Suppose that the function $f$ has frequencies support
in $\mathcal{A}_p$ for some $p$ with $1< 2^p<4A_1\lambda^{(\ell-1)/\ell}$; we shall deal with the case $p=0$ separately. Now, if we choose a nonnegative $\Phi\in\mathcal{S}(\mathbb{R})$ such that $\widehat{\Phi}(\xi)=1$ whenever $|\xi|\lesssim 1$, and define $\Phi_{2^p\lambda^{1/\ell}}\in\mathcal{S}(\mathbb{R})$ by $\widehat{\Phi}_{2^p\lambda^{1/\ell}}(\xi)=
\widehat{\Phi}(2^{-p}\lambda^{-1/\ell}\xi)$, then $\widehat{\Phi}_{2^p\lambda^{1/\ell}}(\xi)=1$ for all $\xi\in\mathcal{A}_p$. Hence by \eqref{mol} of Observation \ref{elobs},
\begin{equation}\label{smoothed measure}
\int_{\mathbb{R}} |T_\lambda f(x)|^2 w(x)dx
\leq \int_{\mathbb{R}}|T_\lambda f(x)|^2 w_1(x)dx,
\end{equation}
where $w_1:=\Phi_{2^p\lambda^{1/\ell}} \ast w$.

For general $p$ we have no clear geometric control of the function $|T_\lambda\Phi_{2^p\lambda^{1/\ell}}|$, and so are not in a position to meaningfully apply \eqref{mol2} of Observation \eqref{elobs} at this stage. We proceed by performing a further frequency decomposition of the function $f$ at a scale $0<L\lesssim 2^p\lambda^{1/\ell}$ to be specified later.

Let
$W\in\mathcal{S}(\mathbb{R})$ be such that $\widehat{W}$ is supported in $[-2,2]$ and
$$\sum_{k\in\mathbb{Z}}\widehat{W}(\xi-k)=1$$ for all $\xi\in\mathbb{R}$.
Define $W_{L}\in\mathcal{S}(\mathbb{R})$ by
$\widehat{W}_{L}(\xi)=\widehat{W}(\xi/L)$, and $W_{L,k}\in\mathcal{S}(\mathbb{R})$ by
$\widehat{W}_{L,k}(\xi)=\widehat{W}_L(\xi-kL)$,
so that
$$\sum_{k\in\mathbb{Z}}\widehat{W}_{L,k}(\xi)=1$$
for all $\xi\in\mathbb{R}$. Hence if $f_k=
W_{L,k}*f$ then
$$f=\sum_{k\in\mathbb{Z}}f_k,$$ and so by
the linearity of $T_\lambda$,
\begin{equation}\label{clever}
T_\lambda f=\sum_{k\in\mathbb{Z}}T_\lambda f_k.
\end{equation}
We note that since $\supp \;\widehat{f}\subseteq\mathcal{A}_p$, the only nonzero contributions to the above sum occur when $|k|\sim 2^p\lambda^{1/\ell}/L$.

Unfortunately the decomposition \eqref{clever} does not in general exhibit any (almost) orthogonality on $L^2(w_1)$ as the Fourier support of $w_1$ is too large. We thus seek an efficient way of dominating $w_1$ by a further weight which does have a sufficiently small Fourier support. This we achieve in two stages; the first involving a local supremum, and the second a carefully considered mollification. For each $x\in\mathbb{R}$ let
$$w_{2}(x)=\sup_{|x'-x|\leq (4A_1)^{-\frac{1}{\ell-1}}/L}w_1(x').$$
The factor of $4A_1$ appearing in the definition of $w_2$ is included for technical reasons that will become clear later.

Let $\Theta\in\mathcal{S}(\mathbb{R})$ be a nonnegative function whose Fourier transform is nonnegative and supported in $[-1,1]$.
Now let $$w_{3}=\Theta_L*w_2$$ where $\Theta_L\in\mathcal{S}(\mathbb{R})$ is defined by $\widehat{\Theta}_L(\xi)=\widehat{\Theta}(\xi/L)$. By construction $w_3$ has Fourier support in $[-L,L]$.
\begin{lemma}\label{forcing orthogonality}
$w_1\leq w_2 \lesssim w_3$.
\end{lemma}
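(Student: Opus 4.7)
The plan is to establish the two inequalities $w_1\le w_2$ and $w_2\lesssim w_3$ in turn. The first is immediate from the definition of $w_2$: since $(4A_1)^{-1/(\ell-1)}/L>0$, the point $x'=x$ is admissible in the supremum, giving $w_2(x)\ge w_1(x)$.

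For the second inequality, the main step is to exploit the fact that the supremum defining $w_2(x)$ is attained (or nearly attained) at some point $x^\ast$ with $|x^\ast-x|\le c/L$, where $c=(4A_1)^{-1/(\ell-1)}$. Then for any $y\in B(x^\ast,c/L)$ we have $|y-x^\ast|\le c/L$, so $w_2(y)\ge w_1(x^\ast)\ge \tfrac{1}{2}w_2(x)$ (using a near-maximiser if the supremum is not attained). This transfers a constant fraction of the value $w_2(x)$ to the nearby ball $B(x^\ast,c/L)$, which has radius comparable to the natural scale $1/L$ of the mollifier $\Theta_L$.

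To finish, I would use that $\Theta_L(x)=L\Theta(Lx)$ and write
\begin{equation*}
w_3(x)=\int L\Theta(L(x-y))w_2(y)\,dy\;\ge\;\tfrac{1}{2}w_2(x)\int_{B(x^\ast,c/L)} L\Theta(L(x-y))\,dy.
\end{equation*}
Changing variables $u=L(x-y)$ reduces the last integral to $\int_{|u-v|\le c}\Theta(u)\,du$ where $v=L(x-x^\ast)$ has $|v|\le c$; the domain of integration is therefore contained in $B(0,2c)$ and has length $2c$. Since $\Theta$ is a nonnegative Schwartz function that we are free to choose (the paper only specifies it via the support of $\widehat{\Theta}$ and nonnegativity), I would take $\Theta$ so that $\Theta(0)>0$—for instance $\Theta=\eta^2$ for a suitable real-valued Schwartz bump $\eta$, so that $\widehat\Theta=\widehat\eta\ast\widehat\eta$ has support in $[-1,1]$. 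By continuity $\Theta$ is then bounded below by a positive constant on $B(0,2c)$, and the integral is $\gtrsim 1$, giving $w_3(x)\gtrsim w_2(x)$.

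The only subtlety is the calibration of scales: the radius $c/L$ in the definition of $w_2$ must be a fixed multiple of the scale $1/L$ of the mollifier $\Theta_L$, which is precisely what the specific factor $(4A_1)^{-1/(\ell-1)}$ ensures. All implicit constants depend only on $A_1$ and $\ell$, which is consistent with the notational convention of the paper.
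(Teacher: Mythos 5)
Your outline of the proof is essentially correct and close in spirit to the paper's argument, but there is a genuine gap at the very end, in the claim ``By continuity $\Theta$ is then bounded below by a positive constant on $B(0,2c)$.'' Continuity of $\Theta$ at $0$ together with $\Theta(0)>0$ only gives you a (possibly very small) interval $[-c'',c'']$ on which $\Theta\gtrsim 1$; it does \emph{not} give you this on a prescribed interval such as $B(0,2c)$ with $c=(4A_1)^{-1/(\ell-1)}$, and $c$ can be close to $1$ (the paper normalises $4A_1\ge 1$, so $c\le 1$ but may be comparable to $1$). Since $\widehat{\Theta}$ must be supported in $[-1,1]$, you are not free to rescale $\Theta$ to enlarge its zone of positivity without consequences. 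If $c''<c$ then your chain $w_3(x)\ge\tfrac12 w_2(x)\int_{|u-v|\le c}\Theta(u)\,du$ (with $|v|\le c$) does not immediately yield $\gtrsim w_2(x)$, because $\Theta$ might vanish (or be tiny) on much of the translated interval $[v-c,v+c]\subset[-2c,2c]$.

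The fix is small but necessary, and there are two natural routes. One is to actually \emph{construct} $\Theta$ with $\widehat{\Theta}$ nonnegative, supported in $[-1,1]$, and additionally so concentrated near $\xi=0$ that $\Theta$ is nearly constant on $[-2,2]\supseteq[-2c,2c]$ (taking $\Theta=\eta^2$ with $\widehat\eta\ge 0$, even, supported in a tiny interval $[-\varepsilon,\varepsilon]$ does this, since $|\eta(x)-\eta(0)|\lesssim\varepsilon|x|\eta(0)$). The other, which is what the paper does, avoids any constraint tying the size of the region where $\Theta\gtrsim 1$ to $c$: from the definition of $w_2$ as a sliding supremum of $w_1$ over a window of radius $c/L$, one observes that $w_2\ge w_2(x)$ on at least one of the two \emph{half}-windows $[x-c/L,x]$ or $[x,x+c/L]$. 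Either half-window contains $x$ at an endpoint, so no matter how small the interval $[-c''/L,c''/L]$ on which $\Theta_L\gtrsim L$ may be, its overlap with the chosen half-window is an interval of length $\min(c,c'')/L\gtrsim 1/L$, and integrating over that overlap gives $w_3(x)\gtrsim w_2(x)$. Your observation that $w_2\ge\tfrac12 w_2(x)$ on $B(x^\ast,c/L)$ is correct and equivalent in spirit, and your argument can be repaired the same way: restrict the integral to $B(x^\ast,c/L)\cap B(x,c''/L)$ (with $c''\le c$, which may be assumed), which one checks always has length $\ge c''/L$, giving $w_3(x)\gtrsim c''\,w_2(x)$.
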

\begin{proof}
The first inequality is trivial and so we focus on the second. Observe that since $\widehat{\Theta}$ is nonnegative $\Theta(0)>0$ and so by continuity there exists an absolute constant $0<c\leq 1$ such that $\Theta(x)\gtrsim 1$ whenever $|x|\leq c$.
Thus $\Theta_L(x)\gtrsim L$ whenever $|x|\leq c/L$.
Consequently
\begin{eqnarray*}
w_3(x)=\Theta_L \ast w_2 (x) &\gtrsim& L \int_{|x'| \leq
c/L} w_2(x-x')dx' \\ &\geq& L\int_{|x'| \leq
\tilde{c}/L} w_2(x-x')dx'
\end{eqnarray*}
where $\tilde{c} = \min\{c, (4A_1)^{-1/(\ell-1)}\}$.
By the definition of $w_2$ and elementary considerations, either
\begin{eqnarray*}
w_2(x-x') \geq w_2(x) \quad\mbox{for all}\quad - (4A_1)^{-\frac{1}{\ell-1}}/L \leq x' \leq 0,
\end{eqnarray*}
or
\begin{eqnarray*}
w_2(x-x') \geq w_2(x) \quad\mbox{for all}\quad 0
\leq x' \leq (4A_1)^{-\frac{1}{\ell-1}}/L,
\end{eqnarray*}
and so $w_3(x) \gtrsim w_2(x)$
with implicit constant depending only on $A_1$ and $\ell$.
\end{proof}
We now see that the decomposition \eqref{clever} is almost orthogonal in the smaller $L^2(w_3)$.
By \eqref{smoothed measure}, Lemma \ref{forcing orthogonality}, \eqref{clever} and Parseval's identity we have
\begin{eqnarray*}
\begin{aligned}
\int_{\mathbb{R}}|T_\lambda f|^2w&\leq \int_{\mathbb{R}}|T_\lambda f|^2w_3\\
&=\int_{\mathbb{R}}\left|\sum_{k\in\mathbb{Z}}T_\lambda f_k\right|^2w_3\\
&=\int_{\mathbb{R}}\sum_{k,k'\in\mathbb{R}}T_\lambda f_k\: \overline{T_\lambda f_{k'}}\:w_3\\
&=\int_{\mathbb{R}}\sum_{k,k'\in\mathbb{R}}\widehat{T_\lambda f_k}*\widehat{\widetilde{T_\lambda f_{k'}}}\:\widehat{w}_3.
\end{aligned}
\end{eqnarray*}
Since $\supp\;\widehat{f}_k\subset [(k-2)L, (k+2)L]$ we have
$$
\supp\;\widehat{T_\lambda f_k}*\widehat{\widetilde{T_\lambda f_{k'}}}\subseteq [(k-k'-4)L, (k-k'+4)L],$$
and so
$$\widehat{T_\lambda f_k}*\widehat{\widetilde{T_\lambda f_{k'}}}\:\widehat{w}_3\equiv 0$$
whenever $|k-k'|\geq 6$. Thus by the Cauchy--Schwarz inequality we have
\begin{eqnarray}\label{gotorth}
\begin{aligned}
\int_{\mathbb{R}}|T_\lambda f|^2w&\lesssim \int_{\mathbb{R}}\sum_{k,k'\in\mathbb{R}: |k-k'|<6}T_\lambda f_k\: \overline{T_\lambda f_{k'}}\:w_3\\
&\lesssim\int_{\mathbb{R}}\sum_{k\in\mathbb{Z}}|T_\lambda f_k|^2\:w_3.
\end{aligned}
\end{eqnarray}
Now let $\Psi\in\mathcal{S}(\mathbb{R})$ be such that
\begin{eqnarray*}
\widehat{\Psi}(\xi) & = & \left\{
\begin{array}{c}
1 \quad\mbox{if}\quad |\xi| \leq 2 \\
0 \quad\mbox{if}\quad |\xi| \geq 4
\end{array} \right.,
\end{eqnarray*}
and define $\Psi_{L}, \Psi_{L,k}\in\mathbb{S}(\mathbb{R})$ as we did $W_L$ and $W_{L,k}$ previously.
Since $\widehat{\Psi}_{L,k}(\xi)=1$ when $\xi\in\supp\;\widehat{f}_{k}$, by \eqref{gotorth} followed by \eqref{mol2} of Observation \ref{elobs}
we have
\begin{equation}\label{juncture}
\int_{\mathbb{R}}|T_\lambda f|^2w\lesssim
\int_{\mathbb{R}}\sum_{k\in\mathbb{Z}}|T_\lambda f_k|^2\:w_3
\leq
\sum_{k\in\mathbb{Z}}\|T_\lambda\Psi_{\ell,\lambda}\|_1\int_{\mathbb{R}}|f|^2|T_\lambda\Psi_{\ell,\lambda}|*w_3.
\end{equation}
Our next lemma tells us that provided the scale $L$ is small enough, the functions $|T_\lambda\Psi_{L,k}|$ satisfy estimates similar to those of $\Psi_L$ uniformly in $k$.
\begin{lemma}\label{heart} If $L=2^{-p/(\ell-1)}\lambda^{1/\ell}$ and $|k|\sim 2^p\lambda^{1/\ell}/L$ then
\begin{eqnarray*}
|T_\lambda\Psi_{L,k}(x)| \lesssim \lambda^{-\frac{1}{\ell}}2^{-\frac{p(\ell-2)}{2(\ell-1)}}
H_L(x)
\end{eqnarray*}
where for each $N\in\mathbb{N}$, the function $H_L$ satisfies
\begin{eqnarray*}
H_L(x) \lesssim \frac{L}{(1+
L|x|)^N}
\end{eqnarray*}
for all $x\in\mathbb{R}$. The implicit constants depend on at most $\ell$, and finitely many of the $A_j$.
\end{lemma}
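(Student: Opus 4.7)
The plan is to write $T_\lambda\Psi_{L,k}(x)$ as an oscillatory integral and apply the method of stationary phase, with a rescaling step to obtain uniform control. From $\Psi_{L,k}(y) \propto Le^{ikLy}\Psi(Ly)$ (up to a factor from the Fourier convention), a change of variable gives
\[
T_\lambda\Psi_{L,k}(x) = Le^{ikLx}\int e^{i\Phi(u)}\psi(u)\Psi(L(x-u))\,du,\qquad \Phi(u) = \lambda\phi(u)-kLu.
\]
The critical point $u_c$ of $\Phi$ is determined by $\phi'(u_c) = kL/\lambda$, and combining this with the hypotheses $L = 2^{-p/(\ell-1)}\lambda^{1/\ell}$, $|k|\sim 2^p\lambda^{1/\ell}/L$, and the bounds \eqref{upperlower} yields $|u_c|\sim 1/L$ and $|\Phi''(u)| = \lambda|\phi''(u)|\sim \lambda/L^{\ell-2}$ on a $1/L$-neighbourhood of $u_c$.

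Next, rescale $u = v/L$ to transform the integral into
\[
|T_\lambda\Psi_{L,k}(x)| = \Bigl|\int e^{i\mu g(v)}\psi(v/L)\Psi(Lx-v)\,dv\Bigr|,
\]
where $\mu = \lambda/L^\ell \sim 2^{p\ell/(\ell-1)}$ is a large parameter and $g(v) = L^\ell\phi(v/L)-(k/\mu)v$. The rescaled phase has derivatives $g^{(j)}(v) = L^{\ell-j}\phi^{(j)}(v/L)$ bounded uniformly in $L$ (using $L\gtrsim 1$, which follows from $2^p \leq 4A_1\lambda^{(\ell-1)/\ell}$, together with the finite-type structure analysed in Section \ref{properties}), and possesses a non-degenerate critical point $v_c = Lu_c$ with $|v_c|\sim 1$ and $|g''(v_c)|\sim 1$. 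The arithmetic identity $\mu^{-1/2} = \lambda^{-1/\ell}\cdot 2^{-p(\ell-2)/(2(\ell-1))}\cdot L$ shows that $\mu^{-1/2}$ is exactly the target size (multiplied by the peak value $L$ of $H_L$).

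Finally, apply stationary phase to the $v$-integral using a smooth cutoff $\eta$ supported in a fixed neighbourhood of $v_c$. On the support of $\eta$, van der Corput's second lemma gives a bound of the form $\mu^{-1/2}(\|a\eta\|_\infty + V(a\eta))$ with $a(v) = \psi(v/L)\Psi(Lx-v)$; the Schwartz decay of $\Psi$, together with $|Lx-v|\geq|Lx-v_c|-O(1)$ on the cutoff support, controls these norms by $(1+|Lx-v_c|)^{-N}$ for any $N$. Off the cutoff support $|g'|$ is bounded below, so $N$-fold integration by parts produces extra $\mu^{-N}$ decay and the same Schwartz-decay factor. Summing and using $|v_c|\sim 1$ to replace $(1+|Lx-v_c|)^{-N}$ by $(1+L|x|)^{-N}$ up to constants yields the claim with $H_L(x) = L/(1+L|x|)^N$. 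The main obstacle is carrying the stationary-phase argument through with constants uniform in $L$, $\mu$, and $x$; the rescaling reduces this to a universal oscillatory integral problem, after which the remaining bookkeeping is routine.
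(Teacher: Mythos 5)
The approach is correct and takes a genuinely different, though closely related, route from the paper. The paper keeps the original scale and dyadically decomposes the amplitude $\Psi_{L}(x-y)$ by the size of $|x-y|$, splitting the resulting sum according to whether $2^j \geq cL|x|$; in the first regime it invokes the primitive form of the sup van der Corput estimate from Lemma~\ref{asymptotics k hat}, and in the second it carries out $N$-fold integration by parts with an explicit accounting of the iterated operator $D_k^*$. You instead rescale $y=v/L$ so that the stationary point sits at unit scale, identify the oscillatory parameter $\mu=\lambda/L^\ell=2^{p\ell/(\ell-1)}$, verify that $\mu^{-1/2}$ is precisely the claimed size, and split by distance from $v_c$: van der Corput near $v_c$, integration by parts away. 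Both routes ultimately rest on the same two ingredients (a second-derivative-test bound giving $\mu^{-1/2}$, and repeated integration by parts giving the Schwartz tail), so your plan does reproduce the lemma; the rescaling makes the critical-point geometry transparent, whereas the paper's decomposition in $|x-y|$ sidesteps the need to locate the stationary point explicitly.

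Two points in your writeup deserve more care. First, the assertion that $g^{(j)}(v)=L^{\ell-j}\phi^{(j)}(v/L)$ is ``bounded uniformly in $L$'' is only correct for $j\geq\ell$, or when $|v|\lesssim 1$; for $j<\ell$ and $|v|$ ranging over the support of $\psi(v/L)$ (so up to $O(L)$) one has $|g^{(j)}(v)|\sim|v|^{\ell-j}$, which grows. This does not sink the argument because on the non-stationary region the growth is compensated by the corresponding growth of $|g'(v)|\sim|v|^{\ell-1}$, but that compensation is exactly the index bookkeeping for $(D_k^*)^N$ (or your $(L^*)^N$) that the paper records via the constraint $(\ell-1)n-\sum_{j}m_j(\ell-j)+r\leq\ell N$; calling this ``routine'' understates the most technical step of the proof. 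Second, for $\ell$ odd the equation $\phi'(y_0)=kL/\lambda$ can have more than one solution (up to $\ell-1$ by Rolle and the finite-type condition), so the cutoff $\eta$ must be placed around each stationary point rather than a single $v_c$; this is harmless, but should be stated, and you must also verify that $|g'|\gtrsim 1$ on the complement of those finitely many cutoffs, uniformly in $L$ and $k$, which again uses the bounds \eqref{upperlower}.
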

\begin{proof}
For $k\in\mathbb{Z}$ and $y\in\mathbb{R}$ let $h_k(y)=\phi(y)-\lambda^{-1}kLy$ and observe that
$$T_\lambda\Psi_{L,k}(x)=e^{ikLx}\int_\mathbb{R} e^{i\lambda h_k(y)}
\psi(y) \Psi_L(x-y)dy.$$
Now, $y_0$ is a stationary point of the phase $h_k$ precisely when $\phi'(y_0)=\lambda^{-1}kL$. However, since $|k|\sim 2^p\lambda^{1/\ell}/L$ and $L=2^p\lambda^{1/\ell}$
we have \begin{equation}\label{small}
\lambda^{-1}|k|L\sim 1/L^{\ell-1},
\end{equation}
and since $|\phi'(y)|\sim |y|^{\ell-1}$ (by \eqref{upperlower}), we have
$|y_0|\sim 1/L$. As usual the implicit constants depend only on $\ell$ and finitely many of the $A_j$. We note that since $\phi^{(\ell)}\not=0$, there are at most $\ell-1$ such stationary points $y_0$ in the support of $\psi$.

Let $(\eta_j)_{j=0}^\infty$ be a smooth partition of unity on $\mathbb{R}$ with
$\supp \: \eta_j \subset \{ x \in \mathbb{R}: 2^{j-1} \leq |x| \leq 2^{j+1}\}$ for $j\geq 1$ and
$\supp \: \eta_0 \subset \{ x \in \mathbb{R}: |x|\leq 2\}$. For uniformity purposes we suppose that $(\eta_j)$ is constructed in the standard way from a fixed smooth bump function and taking differences. Define $(\eta_{L,j})_{j=0}^\infty$ by $\eta_{L,j}(x)=\eta_j(Lx)$. Clearly $(\eta_{L,j})_{j=0}^\infty$ forms a partition of unity on $\mathbb{R}$ with
$\supp \: \eta_{L,j} \subset \{ x \in \mathbb{R}: |x|\sim 2^j/L\}$ for $j\geq 1$ and
$\supp \: \eta_{L,0} \subset \{ x \in \mathbb{R}: |x|\lesssim 1/L\}$.
We may thus write
\begin{equation*}
T_\lambda\Psi_{L,k} =
\sum_{j=0}^\infty T_\lambda(\Psi_{L,k}\eta_{L,j}).
\end{equation*}
We now write
\begin{equation}\label{locglob}
|T_\lambda\Psi_{L,k}(x)| \leq \sum_{j:2^j \geq c
L|x|} |T_\lambda(\Psi_{L,k}\eta_{L,j})(x)| +  \sum_{j:2^j <c
L|x|} |T_\lambda(\Psi_{L,k}\eta_{L,j})(x)|,
\end{equation}
where $c$ is a positive constant depending on $A_\ell$ to be chosen later.
We shall prove the required bounds for each of the sums in \eqref{locglob} separately.

Fix
$x$ and suppose that $2^j \geq  cL|x|$,
then integrating by parts we have
\begin{eqnarray*}
\begin{aligned}
|T_\lambda(\Psi_{L,k}\eta_{L,j})(x)| &= \left|\int_\mathbb{R} \frac{d}{dy} \left( \int_{-\infty}^y
e^{i(\lambda\phi(z) - kLz)} \psi(z) dz \right) \Psi_{L,k}(x-y) \eta_{L,j}(x-y)dy\right| \\
&\leq \int_\mathbb{R} \left| \int_{-\infty}^y e^{i(\lambda\phi(z) - kLz)}
\psi(z) dz \right| \left|\frac{d}{dy}(\Psi_{L,k}(x-y) \eta_{L,j} (x-y))\right|dy.
\end{aligned}
\end{eqnarray*}
By Lemma \ref{asymptotics k hat}, we have the estimate
$$
\left| \int_{-\infty}^y e^{i(\lambda\phi(z) - kLz)} \psi(z) dz \right|
\lesssim
\lambda^{-\frac{1}{\ell}}2^{-\frac{p(\ell-2)}{2(\ell-1)}},
$$
uniformly in $k\sim 2^p\lambda^{1/\ell}/L$ and $y$, and so
\begin{equation*}
|T_\lambda(\Psi_{L,k}\eta_{L,j})(x)|\lesssim \lambda^{-\frac{1}{\ell}}2^{-\frac{p(\ell-2)}{2(\ell-1)}} 2^{-Nj}L
\end{equation*}
for any $N \in \mathbb{N}$. Thus
\begin{eqnarray*}
\begin{aligned}
\sum_{j:2^j\geq cL|x|}
|T_\lambda(\Psi_{L,k}\eta_{L,j})(x)|&\lesssim \lambda^{-\frac{1}{\ell}}2^{-\frac{p(\ell-2)}{2(\ell-1)}}L
\sum_{j:2^j\geq cL|x|} 2^{-Nj}\\
&\sim \lambda^{-\frac{1}{\ell}}2^{-\frac{p(\ell-2)}{2(\ell-1)}}
\frac{L}{(1+L|x|)^N}
\end{aligned}
\end{eqnarray*}
similarly uniformly.

We now suppose that $2^j <  cL|x|$.
Then
\begin{equation}\label{estimate for I}
|T_\lambda(\Psi_{L,k}\eta_{L,j})(x)| \leq \lambda^{-N} \int_\mathbb{R}|(D_k^*)^N (\psi(y) \Psi_{L}(x-y)
\eta_{L,j}(x-y))| dy
\end{equation}
for any $N \in \mathbb{N}$, where the differential operator $D_k^*$ is given by
\begin{eqnarray*}
D_k^*g(y) = \frac{d}{dy} \left(\frac{g(y)}{h_k'(y)}\right).
\end{eqnarray*}
An elementary induction argument shows that $|(D_k^*)^Ng|$ is bounded by a sum of terms (the number of which depending only on $N$) of the form
\begin{equation}\label{action of D star}
\frac{|g^{(r)}|}{|h_k'|^n}\prod_{j=2}^{\ell-1}|\phi^{(j)}|^{m_j},
\end{equation}
uniformly in $k$,
where the indices $m_j$, $n$ and $r$ satisfy $$(\ell-1)n - \sum_{j=2}^{\ell-1} m_j(\ell-j) + r \leq \ell N,$$ $N \leq n \leq 2N$ and $0 \leq r \leq N$. Here only low derivatives of $\phi$ feature since we have used the bound $\|\phi^{(j)}\|_\infty\leq A_j$ for $j\geq\ell$.

Now, provided the constant $c>0$ in \eqref{locglob} is chosen sufficiently small (depending on at most $A_\ell$) then $|y|\sim |x|$ and by \eqref{small},
$$|h_k'(y)|=|\phi'(y)-\lambda^{-1}kL|\sim |y|^{\ell-1}\sim |x|^{\ell-1}$$ on the support of the integrand in \eqref{estimate for I}. Recalling also that $|\phi^{(j)}(y)|\sim |y|^{\ell-j}$ for all $2\leq j\leq \ell-1$, the expression \eqref{action of D star} is therefore comparable to
$$
\frac{|g^{(r)}(y)||y|^{\sum_{j=2}^{\ell-1}m_j(\ell-j)}}{|y|^{(\ell-1)n}}\lesssim |g^{(r)}(y)||x|^{r-\ell N}
$$
for all $y$ in the support of the integrand in \eqref{estimate for I}. Hence for $2^j<cL|x|$ and any $M\in\mathbb{N}$,
\begin{eqnarray*}
\begin{aligned}
|T_\lambda(\Psi_{L,k}\eta_{L,j})(x)|&\lesssim \lambda^{-N}|x|^{r-\ell N}\sum_{r=0}^N\int_{\mathbb{R}}\left|\left(\frac{d}{dy}\right)^r(\psi(y) \Psi_{L}(x-y)\eta_{L,j}(x-y))\right|dy\\
&\lesssim \lambda^{-N}|x|^{r-\ell N}\sum_{r=0}^N L^{r+1}2^{-jM}2^j/L\\
&=2^{-(M-1)j}(\lambda^{-\frac{1}{\ell}}L)^{\ell(N-1/2)}\lambda^{-\frac{1}{\ell}}2^{-\frac{p(\ell-2)}{2(\ell-1)}}\sum_{r=0}^N\frac{L}{(L|x|)^{\ell N-r}}\\
&\lesssim 2^{-(M-1)j}\lambda^{-\frac{1}{\ell}}2^{-\frac{p(\ell-2)}{2(\ell-1)}}\frac{L}{(1+L|x|)^{N}},
\end{aligned}
\end{eqnarray*}
since $\lambda^{-1/\ell}L=2^{-{p}/{(\ell-1)}}\leq 1$.
Thus
$$\sum_{j:2^j <c
L|x|} |T_\lambda(\Psi_{L,k}\eta_{L,j})(x)|\lesssim \lambda^{-\frac{1}{\ell}}2^{-\frac{p(\ell-2)}{2(\ell-1)}}\frac{L}{(1+L|x|)^{N}},$$ completing the proof of the lemma.
\end{proof}
If we let $w_4 = \lambda^{-\frac{2}{\ell}}2^{-\frac{p(\ell-2)}{\ell-1}}H_L * w_3$, then by \eqref{juncture} and Lemma \ref{heart} we conclude that
\begin{eqnarray*}
\int_\mathbb{R} |T_\lambda f(x)|^2 w(x)dx \lesssim \int_\mathbb{R}
\sum_k |f_k(x)|^2 w_4(x)dx.
\end{eqnarray*}
Now on applying Lemma \ref{LP 1}, our weighted estimate for
$T_\lambda$ becomes
\begin{eqnarray}\label{intermediate weighted}
\int_\mathbb{R} |T_\lambda f(x)|^2 w(x)dx \lesssim \int_\mathbb{R}
|f(x)|^2 w_5(x)dx
\end{eqnarray}
where $w_5 = |W_L| \ast w_4$.

In order to complete the proof of Proposition \ref{frequency restricted} for $\supp\:\widehat{f}\subseteq \mathcal{A}_p$ it remains to show that
\begin{equation}\label{introducing max function}
w_5(x) \lesssim
M\mathcal{M}_{\ell,\lambda}Mw(x).
\end{equation}
Since $w_5=\lambda^{-\frac{2}{\ell}}2^{-\frac{p(\ell-2)}{\ell-1}}|W_L|*H_L*\Theta_L*w_2$, we have $$w_5\lesssim \lambda^{-\frac{2}{\ell}}2^{-\frac{p(\ell-2)}{\ell-1}}Mw_2,$$ where $M$ denotes the Hardy--Littlewood maximal function.

By translation-invariance it thus suffices to show that $$\lambda^{-\frac{2}{\ell}}2^{-\frac{p(\ell-2)}{\ell-1}}w_2(0)\lesssim \mathcal{M}_{\ell,\lambda}Mw(0)$$ with implicit constant uniform in $p$.
Now,
$$
w_2(0)=\sup_{|y|\leq (4A_1)^{-\frac{1}{\ell-1}}/L} \Phi_{2^{p}\lambda^{1/\ell}}*w(y),$$ and so on setting $r=2^{-p}\lambda^{-1/\ell}$ we obtain
$$
\lambda^{-\frac{2}{\ell}}2^{-\frac{p(\ell-2)}{\ell-1}}w_2(0)\lesssim
r^{\frac{\ell-2}{\ell-1}}\lambda^{-\frac{1}{\ell-1}}\sup_{|y|\leq (4A_1\lambda r)^{-\frac{1}{\ell-1}}}\Phi_{1/r}*w(y).$$
For each $N\in\mathbb{N}$ we may estimate
\begin{eqnarray}\label{ana}
\begin{aligned}
r^{\frac{\ell-2}{\ell-1}}\lambda^{-\frac{1}{\ell-1}}\Phi_{1/r}*w(y)&\lesssim
(\lambda r)^{-\frac{1}{\ell-1}}\int_{\mathbb{R}}\frac{w(x)}{(1+|x-y|/r)^N}dx\\
&\sim (\lambda r)^{-\frac{1}{\ell-1}}\int_{|x-y|\leq r}w(x)dx\\&\;\;\;\;\;\;\;\;\;\;\;\;\;\;\;+\sum_{k=1}^\infty 2^{-kN}(\lambda r)^{-\frac{1}{\ell-1}}\int_{|x-y|\sim 2^kr}w(x)dx.
\end{aligned}
\end{eqnarray}


If for $s>0$ we define the averaging operator $A_s$ by
\begin{eqnarray*}
A_sw(y) = \frac{1}{2s} \int_{|x-y| \leq s} w(x)dx,
\end{eqnarray*}
then we may write
\begin{eqnarray*}
\begin{aligned}
(\lambda r)^{-\frac{1}{\ell-1}}\int_{|x-y| \sim 2^kr} w(x)dx &=(\lambda r)^{-\frac{1}{\ell-1}}2^kr A_{2^kr}w(y) \\ &\lesssim 2^k(\lambda r)^{-\frac{1}{\ell-1}}\int_{|y-y'| \leq r} A_{2^{k+1}r}w(y') dy' \\ &\leq 2^k (\lambda r)^{-\frac{1}{\ell-1}}\int_{|y-y'| \leq r} Mw(y') dy',
\end{aligned}
\end{eqnarray*}
since $A_{2^kr}w(y) \lesssim A_{2^{k+1}r}w(y')$ if $|y-y'| \leq r$.
Hence by \eqref{ana} we have
\begin{eqnarray}\label{fernando}
\begin{aligned}
r^{\frac{\ell-2}{\ell-1}}\lambda^{-\frac{1}{\ell-1}}\Phi_{1/r}*w(y) &\lesssim (\lambda r)^{-\frac{1}{\ell-1}}\int_{|x-y|\leq r}w(x)dx\\& +  (\lambda r)^{-\frac{1}{\ell-1}}\int_{|y-y'| \leq r} Mw(y') dy'.
\end{aligned}
\end{eqnarray}
Note that since $r=2^{-p}\lambda^{-1/\ell}$ and $1 \leq 2^p \leq 4A_1 \lambda^{(\ell-1)/\ell}$ we have that $(4A_1 \lambda)^{-1} \leq r \leq \lambda^{-1/(\ell-1)}$.
It will be convenient to consider separately the two cases $\lambda^{-1} \leq r \leq \lambda^{-1/(\ell-1)}$ and $(4A_1 \lambda)^{-1} \leq r \leq \lambda^{-1}$.\footnote{By making $A_1$ larger if necessary, we may of course assume that $4A_1 \geq 1$.}

If $\lambda^{-1} \leq r \leq \lambda^{-1/(\ell-1)}$, then $(y,r) \in \Gamma_{\ell, \lambda}(0)$ and so the first and second terms in the right hand side of \eqref{fernando} are dominated by $\mathcal{M}_{\ell, \lambda}(w)(0)$ and $\mathcal{M}_{\ell,\lambda}(Mw)(0)$ respectively.

If $(4A_1 \lambda)^{-1} \leq r \leq \lambda^{-1}$ then by taking $r'=\lambda^{-1}$ we have
\begin{equation}\label{james}
(\lambda r)^{-\frac{1}{\ell-1}}\int_{|x-y|\leq r}w(x)dx \leq (4A_1)^\frac{1}{\ell-1} (\lambda r')^{-\frac{1}{\ell-1}}\int_{|x-y|\leq r'}w(x)dx
\end{equation}
and
\begin{equation}\label{george}
(\lambda r)^{-\frac{1}{\ell-1}}\int_{|y-y'| \leq r} Mw(y') dy' \leq (4A_1)^\frac{1}{\ell-1} (\lambda r')^{-\frac{1}{\ell-1}}\int_{|y-y'| \leq r'} Mw(y') dy'.
\end{equation}
Furthermore, since $|y| \leq (4A_1 \lambda r)^{-1/(\ell-1)}$ an elementary calculation reveals that $(y,r') \in \Gamma_{\ell,\lambda}(0)$, and consequently \eqref{james} and \eqref{george} are dominated by constant multiples of $\mathcal{M}_{\ell, \lambda}(w)(0)$ and $\mathcal{M}_{\ell,\lambda}(Mw)(0)$ respectively. In each case we obtain the bound
\begin{eqnarray*}
r^{\frac{\ell-2}{\ell-1}}\lambda^{-\frac{1}{\ell-1}}\Phi_{1/r}*w(y) \lesssim \mathcal{M}_{\ell,\lambda}(Mw)(0),
\end{eqnarray*}
as claimed, with implicit constant depending on (at most) $A_1$ and $\ell$.

This completes the proof of Proposition \ref{frequency restricted} for $1< 2^p<4A_1\lambda^{(\ell-1)/\ell}$, leaving only the case $p=0$ to consider. To deal with this we observe that when $p=1$, the second frequency decomposition \eqref{clever} is effectively vacuous since $L=2^{-p/(\ell-1)}\lambda^{1/\ell}\sim 2^p\lambda^{1/\ell}$. Given this, it is straightforward to verify that our analysis in the case $p=1$ is equally effective in the case $p=0$. We leave this to the reader.

\section{The proof of Proposition \ref{frequency restricted} for $2^p\geq 4A_1\lambda^{(\ell-1)/\ell}$}\label{mainsecinf}
Suppose that the Fourier support of $f$ is contained in $\mathcal{A}_p$ for some $p$ with $2^p\geq 4A_1\lambda^{(\ell-1)/\ell}$. As we shall see, the rapid decay in \eqref{rapidap} means that such terms may be viewed as error terms in the sense that \eqref{two weighted restricted} holds with $\mathcal{M}_{\ell,\lambda}$ replaced with a much smaller operator.

Now, let $\Phi_{2^p\lambda^{1/\ell}}$ be constructed in the usual way by dilating a fixed Schwartz function such that $\widehat{\Phi}_{2^p\lambda^{1/\ell}}(\xi)=1$ for $\xi\in\mathcal{A}_p$. By \eqref{mol2} of Observation \ref{elobs}, we have
$$\int_{\mathbb{R}}|T_\lambda f|^2w\leq\|T_\lambda\Phi_{2^p\lambda^{1/\ell}}\|_1\int_{\mathbb{R}}|f|^2|T_\lambda\Phi_{2^p\lambda^{1/\ell}}|*w.$$
Now, for each $k\in\mathbb{N}$, repeated integration by parts gives
\begin{eqnarray*}
\begin{aligned}
|T_\lambda\Phi_{2^p\lambda^{1/\ell}}(x)|&=\Bigl|\int_{\mathbb{R}}e^{ix\xi}\widehat{K}_\lambda(\xi)\widehat{\Psi}_{2^p\lambda^{1/\ell}}(\xi)d\xi\Bigr|\\
&\leq |x|^{-k}\int_{\mathbb{R}}\Bigl|\Bigl(\frac{d}{d\xi}\Bigr)^k\left(\widehat{K}_\lambda(\xi)\widehat{\Phi}_{2^p\lambda^{1/\ell}}(\xi)\right)\Bigr|d\xi.
\end{aligned}
\end{eqnarray*}
Using Lemma \ref{asymptotics k hat} and the fact that $|\xi|\sim 2^p\lambda^{1/\ell}\gtrsim \lambda$ for $\xi\in\supp\:\widehat{\Phi}_{2^p\lambda^{1/\ell}}$ we conclude that for each $k,N\in\mathbb{N}$,
$$|T_\lambda\Phi_{2^p\lambda^{1/\ell}}(x)|\lesssim\lambda^{-N}\frac{\lambda}{(1+\lambda|x|)^k}.$$
By repeating the analysis that leads to \eqref{introducing max function} it is now straightforward to verify that
$$
\|T_\lambda\Phi_{2^p\lambda^{1/\ell}}\|_1|T_\lambda\Phi_{2^p\lambda^{1/\ell}}|*w\lesssim\mathcal{M}_{\ell,\lambda}Mw(0),
$$ uniformly in all parameters. This completes the proof of Proposition \ref{frequency restricted}.

\section{The proof of Proposition \ref{maximal bounds}}\label{corsection}

Since our proof will use a Hardy space estimate and complex interpolation, we begin by bounding $\mathcal{M}_{\ell,\lambda}$ by a more ``regular" maximal operator.
For $x \in \mathbb{R}$ let
\begin{eqnarray*}
\widetilde{\Gamma}_{\ell,\lambda}(x)=\{ (y,r) : 0 < r \leq \lambda^{-\frac{1}{\ell}} \quad\mbox{and}\quad |y-x| \leq (\lambda r)^{-\frac{1}{\ell-1}} \},
\end{eqnarray*}
and observe that $\Gamma_{\ell,\lambda}(x)\subseteq\widetilde{\Gamma}_{\ell,\lambda}(x)$.
Let $P$ be a nonnegative compactly supported smooth bump function which is positive on $[-1,1]$, and for each $r>0$, let $P_r(x)=\frac{1}{r}P(\frac{x}{r})$.
Then for any weight function $w$,
\begin{eqnarray*}
\mathcal{M}_{\ell,\lambda}w(x) \lesssim \widetilde{\mathcal{M}}_{\ell,\lambda}w(x) := \sup_{(y,r) \in \widetilde{\Gamma}_{\ell,\lambda}(x)} r(\lambda r)^{-\frac{1}{\ell-1}} |P_r \ast w(y)|.
\end{eqnarray*}
Thus in order to prove Proposition \ref{maximal bounds} it suffices to show that
\begin{eqnarray*}
\|\widetilde{\mathcal{M}}_{\ell, \lambda}\|_{(\frac{\ell}{2})' \rightarrow
(\frac{\ell}{2})'} \lesssim \lambda^{-\frac{2}{\ell}},
\end{eqnarray*}
which by scaling is equivalent to
\begin{eqnarray}\label{bound on M tilde}
\|\widetilde{\mathcal{M}}_{\ell,1}\|_{(\frac{\ell}{2})' \rightarrow
(\frac{\ell}{2})'} \lesssim 1.
\end{eqnarray}
In order to prove (\ref{bound on M tilde}) we define
\begin{eqnarray*}
\mathcal{M}_{\ell}^\beta (\phi) (x) = \sup_{(y,r) \in
\widetilde{\Gamma}_{\ell}(x)} r^{\frac{\ell \beta}{\ell -1}} |P_r \ast \phi(y)|
\end{eqnarray*}
with $P_r$ as above. By
Stein's method of analytic interpolation (see
\cite{S1}), inequality (\ref{bound on M tilde}) can be obtained from the
estimates
\begin{eqnarray*}
\|\mathcal{M}_\ell^0 (\phi)\|_\infty \lesssim \|\phi\|_\infty
\end{eqnarray*}
and
\begin{eqnarray}\label{H1 bound}
\|\mathcal{M}_\ell^1 (\phi)\|_{L^1} \lesssim \|\phi\|_{H^1}.
\end{eqnarray}
The first estimate is elementary, and the second may be verified by
testing on atoms. Let $a$ be an $H^1$-atom with support interval $I$
(by translation invariance we may suppose that $I$ is centred at
the origin). For an atom $a$ as described above, we have the
pointwise bound
\begin{eqnarray*}
r^{\frac{\ell}{\ell-1}}|P_r \ast a(x)| \lesssim \left\{
\begin{array}{c}
r^{\frac{\ell}{\ell-1}}/|I|, \quad\mbox{if}\quad r \lesssim |I| \quad\mbox{and}\quad |x| \lesssim |I| \\
|I|/r^{2-\frac{\ell}{\ell-1}}, \quad\mbox{if}\quad r \gtrsim  |I| \quad\mbox{and}\quad |x| \lesssim r \\
0, \quad\quad\quad\quad\mbox{otherwise.}\quad\quad\quad\quad
\end{array} \right.
\end{eqnarray*}

By the pointwise bounds on $\mathcal{M}_\ell^1(a)$ that follow from the above estimate, one may conclude that
\begin{eqnarray*}
\|\mathcal{M}_\ell^1 (a)\|_{L^1} \lesssim 1
\end{eqnarray*}
which is sufficient to obtain (\ref{H1 bound}).

\subsubsection*{Acknowledgement} It is a pleasure to acknowledge the influence that Tony Carbery has had on the content and perspective of this work.



\end{document}